\documentclass[10pt,reqno]{amsart}
\usepackage{amssymb}
\usepackage{amsmath}
\usepackage{amsthm}
\usepackage{graphicx}

\usepackage{subfigure}

\usepackage{caption}
\usepackage{bm}
\usepackage[colorlinks=true,urlcolor=blue,
citecolor=red,linkcolor=blue,linktocpage,pdfpagelabels,
bookmarksnumbered,bookmarksopen]{hyperref}
\usepackage[titletoc,title]{appendix}
\numberwithin{equation}{section} 
\newcounter{cont}[section] 

\newtheorem{thm}[cont]{Theorem}
\newtheorem{prop}[cont]{Proposition}
\newtheorem{lem}[cont]{Lemma}

\theoremstyle{definition}
\newtheorem{defn}[cont]{Definition}
 \theoremstyle{remark}
 \newtheorem{rem}[cont]{Remark}

\newcommand{\R}{\mathbb{R}}
\newcommand{\e}{\varepsilon}


\begin{document}

\title[Metastable patterns for mean curvature reaction-diffusion models]{Metastable patterns for a reaction-diffusion model with mean curvature-type diffusion}

\author[R. Folino]{Raffaele Folino}

\address[R. Folino]{Departamento de Matem\'aticas y Mec\'anica\\Instituto de 
Investigaciones en Matem\'aticas Aplicadas y en Sistemas\\Universidad Nacional Aut\'onoma de 
M\'exico\\Circuito Escolar s/n, Ciudad Universitaria C.P. 04510 Cd. Mx. (Mexico)}

\email{folino@mym.iimas.unam.mx}

\author[R.G. Plaza]{Ram\'on G. Plaza}

\address[R. G. Plaza]{Departamento de Matem\'aticas y Mec\'anica\\Instituto de 
Investigaciones en Matem\'aticas Aplicadas y en Sistemas\\Universidad Nacional Aut\'onoma de 
M\'exico\\Circuito Escolar s/n, Ciudad Universitaria C.P. 04510 Cd. Mx. (Mexico)}

\email{plaza@mym.iimas.unam.mx}
	 
\author[M. Strani]{Marta Strani}

\address[M. Strani]{Dipartimento di Scienze Molecolari e Nanosistemi\\Universit\`a Ca' Foscari Venezia Mestre\\Campus Scientifico\\Via Torino 155, 30170 Venezia Mestre (Italy)}

\email{marta.strani@unive.it}

\keywords{mean curvature operator; metastability; energy estimates}

\subjclass[2010]{35K20, 35K57, 35B40, 35K65, 35K59}

\maketitle


\begin{abstract} 
Reaction-diffusion equations are widely used to describe a variety of phenomena such as pattern formation
and front propagation in biological, chemical and physical systems. In the one-dimensional model with a balanced bistable reaction function, it is well-known that there is persistence of metastable patterns for an exponentially long time,
i.e. a time proportional to $\exp(C/\e)$ where $C,\e$ are strictly positive constants and $\e^2$ is the diffusion coefficient.
In this paper, we extend such results to the case when the linear diffusion flux is substituted by the mean curvature operator both in Euclidean and Lorentz--Minkowski spaces.
More precisely, for both models, we prove existence of metastable states which maintain a transition layer structure for an exponentially long time and we show that the speed of the layers is exponentially small.
Numerical simulations, which confirm the analytical results, are also provided.
\end{abstract}


\section{Introduction}\label{sec:intro}
In this paper we study the long time dynamics of the solutions to the reaction-diffusion equation 
\begin{equation}\label{eq:Q-model}
	u_t=Q(\e^2u_x)_x-F'(u),
\end{equation}
where $u=u(x,t) : [a,b]\times(0,+\infty)\rightarrow \R$, $\e>0$ is a small parameter 
and $F:\R\to\R$ is a double well potential with wells of equal depth;
more precisely, we assume that the potential $F\in C^3(\R)$ satisfies 
\begin{equation}\label{eq:ass-F}
	F(\pm1)=F'(\pm1)=0, \qquad F''(\pm1)>0, \qquad F(u)>0 \quad \forall\, u\neq\pm1.
\end{equation}
We consider equation \eqref{eq:Q-model} complemented with homogeneous Neumann boundary conditions
\begin{equation}\label{eq:Neu}
	u_x(a,t)=u_x(b,t)=0, \qquad \qquad t>0,
\end{equation}
and initial datum
\begin{equation}\label{eq:initial}
	u(x,0)=u_0(x), \qquad \qquad x\in[a,b].
\end{equation}
Regarding the diffusion flux $Q$, we will consider two different choices: 
the first one is
\begin{equation}\label{eq:curv+}
	Q(s):=\frac{s}{\sqrt{1+s^2}},
\end{equation}
 corresponding to the \emph{mean curvature operator in Euclidean space}, while the second one is
\begin{equation}\label{eq:curv-}
	Q(s):=\frac{s}{\sqrt{1-s^2}},
\end{equation}
and corresponds to the \emph{mean curvature operator in Lorentz--Minkowski space}.

From a physical point of view, the choice of the saturating diffusion \eqref{eq:curv+} was introduced by Rosenau \cite{Rsn90}:
in the theory of phase transitions, the author extends the Ginzburg--Landau free-energy functionals 
and considers a free-energy functional, which has a linear growth rate with respect to the gradient, in order to include interaction due to high gradients. 
Next, the effects of saturating diffusion on convection and reaction processes have been studied in many papers,
among others see \cite{FGS,GaSa15,GaSt19b,KuRo97,KuRo06} and references therein.
We also recall the study of the interaction between flux-saturation effects with those of porous media flow \cite{CCCSS16,CCCSS17}.

Regarding the unbounded and singular function \eqref{eq:curv-}, the nonlinear differential operator  
\begin{equation}\label{mink}
	\textrm{div}\left(\frac{\nabla u}{\sqrt{1-|\nabla u|^2}}\right)
\end{equation}
appears in many applications and it is usually meant as \emph{mean curvature operator in the Lorentz--Minkowski space}.
It is of interest in differential geometry, general relativity and appears in the nonlinear theory of electromagnetism,
where it is usually referred to as Born-Infeld operator; for details see, among others, \cite{BaSi82}, \cite{BAPo16} and references therein.

The difference with respect to the saturating diffusion operator is that, in the case of \eqref{mink}, 
the free-energy functional behaves quadratically for small values of the gradient, 
and it reaches its extremal value with infinite slope when $|\nabla u|$ reaches its upper bound.
It follows that the diffusion flux $\displaystyle \frac{\nabla u}{\sqrt{1-|\nabla u|^2}}$ is singular at the
maximal value of the gradient.

From a mathematical point of view, the main difference between the functions \eqref{eq:curv+} and \eqref{eq:curv-}
is that the first one is a bounded function satisfying 
\begin{equation*}
	\lim_{s\to\pm\infty} Q(s)=\pm1, \qquad \qquad \lim_{s\to\pm\infty} Q'(s)=0,
\end{equation*}
while the second one is an unbounded and singular function satisfying
\begin{equation*}
	\lim_{s\to\pm1} Q(s)=\pm\infty, \qquad \qquad \lim_{s\to\pm1} Q'(s)=+\infty, \qquad \qquad Q'(s)\geq1, \qquad \forall\,s\in\R.
\end{equation*}
Such a difference notably changes the character of the PDE \eqref{eq:Q-model}, as we describe in the sequel.
First of all, notice that in the case of smooth solutions, we can expand the term $Q(\e^2u_x)_x$ and rewrite equation \eqref{eq:Q-model} as
\begin{equation}\label{eq:Q-model2}
	u_t=\e^2Q'(\e^2u_x)u_{xx}-F'(u),
\end{equation}
where $Q'(s)={\left(1\pm s^2\right)^{-3/2}}$.
Therefore, in the case \eqref{eq:curv+} the \emph{diffusion coefficient} is positive but vanishes when $|u_x|\to\infty$,
while in the case \eqref{eq:curv-} it is strictly positive if $\e^2|u_x|<1$.
As a consequence, when the flux function $Q$ is bounded and $Q'$ vanishes at $\pm\infty$, 
the solutions of \eqref{eq:Q-model} can exhibit \emph{hyperbolic phenomena},
such as formation of discontinuities also if the initial datum is smooth (see \cite{KuRo06}).
In the particular case \eqref{eq:curv+} we have a \emph{strongly degenerate parabolic} equation,
as it is indicated in \cite{BerDal92}, where the authors study the Cauchy problem for equation \eqref{eq:Q-model} with $F=0$
and show that the condition
\begin{equation*}
	\int_0^\infty s\,Q'(s)\,ds<\infty
\end{equation*}
implies persistence of discontinuous solution; more precisely, they show that if the initial datum is discontinuous
then there exists a finite time $T>0$ such that the solution remains discontinuous until time $t = T$ and becomes continuous thereafter.
For this reason, the case \eqref{eq:curv+} is also referred to as \emph{strong saturating diffusion}.

As it was previously mentioned, the case of equation \eqref{eq:Q-model} with a saturating diffusion 
and a double well potential $F$ with wells of equal depth has been studied in \cite{KuRo06}, 
where the authors prove that if the potential $F$ is sufficiently large, then there exist discontinuous equilibria; 
also, they numerically show that such discontinuous equilibria are attractive for a wide class of initial data
(including smooth initial data).
The need of a smallness condition on the potential $F$ in order to ensure the existence of smooth standing waves is confirmed in Proposition \ref{prop:ex-met},
where we consider equation \eqref{eq:Q-model} in $\R$ with $Q$ given by \eqref{eq:curv+} and we prove that there exist smooth stationary solutions 
connecting the two minimal points $\pm1$ if $F$ is sufficiently small, see condition \eqref{eq:maxF}.

Regarding the unbounded case  \eqref{eq:curv-}, we mention the paper \cite{BCNy17}, 
where the authors consider the free energy functional 
\begin{equation*}
	\mathcal{J}(u):=\int_{\R\times\omega}\left(1-\sqrt{1-|\nabla u|^2}+F(u)\right)\,dx,
\end{equation*}
with $\omega\in\R^{d-1}$, $d\geq1$ and $F$ satisfying \eqref{eq:ass-F}.
They look for minimizers of $\mathcal{J}$ in the space
\begin{align*}
	\mathcal{X}:=\Big\{u\in W^{1,\infty}(\R\times\omega)\, : \, \|\nabla u\|_{{}_{L^\infty}} & <1 \mbox{ and } \\
	 & \lim_{x\to\pm\infty} u(x,y)=\pm1 \mbox{ uniformly in } y\in\omega\Big\},
\end{align*}
and prove that any minimizer depends only on the first variable and is the unique solution (up to translations) to the boundary value problem
\begin{equation*}	
	\left(\frac{u'}{\sqrt{1-|u'|^2}}\right)'-F'(u)=0, \qquad \qquad \lim_{x\to\pm\infty}u(x)=\pm1.
\end{equation*}
As we will see in Section \ref{sec:met-}, the existence of such solution is crucial in our work and it suggests 
that there exist metastable patterns for equation \eqref{eq:Q-model} when $Q$ is given by \eqref{eq:curv-}. 

To conclude the discussion, we briefly mention that the evolution equation associated to \eqref{eq:curv-} 
has been recently studied in the presence of a convection term in \cite{FoSt}.

Motivated by previous results obtained for the solutions of reaction-diffusion equations with a linear diffusion flux, 
the main goal of this paper is to investigate the phenomenon of \emph{metastability} for equation \eqref{eq:Q-model}; 
from a general point of view, a metastable dynamics appears when the solution to an evolution PDE evolves very slowly in time 
and maintains an \emph{unstable} structure for a very long time.
Roughly speaking, the solution evolves so slowly that it \emph{appears} to be stable,
and after a very long time it undergoes a drastic change and converges to its asymptotic limit.
We refer to this type of solutions as \emph{metastable states} for the evolution PDE.

When choosing $Q(s)=s$ in \eqref{eq:Q-model} and $F$ satisfying \eqref{eq:ass-F}, one obtains the classical \emph{Allen--Cahn equation}
\begin{equation}\label{eq:Al-Ca}
	u_t=\e^2 u_{xx} -F'(u),
\end{equation} 
and it is well known that its solutions exhibit the phenomenon of metastability. 
More precisely, when considering equation \eqref{eq:Al-Ca} with homogeneous Neumann boundary conditions \eqref{eq:Neu},
the solutions maintain a \emph{transition layer structure} for an exponentially long time when the diffusion coefficient is small,
i.e. a time of order $\mathcal{O}(e^{C/\e})$ as $\e\to0$, and after that they converge to a constant equilibrium solution which is one of the minimal points of the potential $F$.

Many papers have been devoted to the study of the metastable dynamics for the Allen--Cahn equation \eqref{eq:Al-Ca};
being impossible to quote all the contributions, we mention here only the pioneering works \cite{BrKo90,CaPe89,CaPe90,FuHa89}.
It is to be observed that two main different approaches to metastability for equation \eqref{eq:Al-Ca} have been proposed in the aforementioned papers.
Since the method we use here is the \emph{energy approach} introduced by Bronsard and Kohn in \cite{BrKo90},
we briefly recall such strategy. The former is based on the fact that the Allen--Cahn equation can be seen as a gradient flow in $L^2(a,b)$ for the Lyapunov functional
\begin{equation*}
	\mathcal{F}[u]=\int_a^b\left[\frac{\e^2}2u^2_x+F(u)\right]\,dx.
\end{equation*}
In \cite{BrKo90}, the authors consider the case $F(u)=\frac14(u^2-1)^2$ and make use of the notion of $\Gamma$-convergence (\cite{Mod87,Ster88}) for the functional
\begin{equation*}
	\mathcal{F}_\e[u]=\int_a^b\left[\frac\e2u^2_x+\frac{F(u)}\e\right]\,dx,
\end{equation*}
which is obtained multiplying by $\e^{-1}$ the functional $\mathcal{F}[u]$.
In this way, Young inequality gives 
\begin{equation*}
	\mathcal{F}_\e[u]\geq \int_a^b|u_x|\sqrt{2F(u)}\,dx=\int_{-1}^{+1}\sqrt{2F(s)}\,ds=:c_0,
\end{equation*}
for any function $u$ connecting $-1$ and $+1$.
The positive constant $c_0$ (independent on $\e$ for the renormalization) represents the minimum energy
to have a transition between $-1$ and $+1$, and one can prove that if $u^\e$ converges in $L^1(a,b)$ to a step function $v$ which
takes only the values $\pm1$ and has exactly $N$ jumps, then
\begin{equation*}
	\liminf_{\e\to0^+} \mathcal{F}_\e[u^\e]\geq Nc_0.
\end{equation*}
Moreover, it is possible to choose a function such that the equality holds.
Using such properties of the functional $\mathcal{F}_\e$, in \cite{BrKo90} the authors show persistence of unstable structures
with $N$ transitions between $-1$ and $+1$ for a time $T_\e=\mathcal{O}(\e^{-k})$, for any $k\in\mathbb{N}$.

The energy approach of \cite{BrKo90} is very powerful and permits to prove existence of metastable states for different evolution PDEs.
For instance, it was improved and used  in \cite{Grnt95} to prove persistence for an exponentially long time of the metastable states for the Cahn--Morral system,
and it was also applied to  the hyperbolic Cahn--Hilliard equation in \cite{FLM19} (see also \cite{Fol17,FLM17} for further information on application of the energy approach to metastability analysis of hyperbolic equations).

Following the same ideas, in this paper we modify the energy approach to rigorously establish the existence of metastable states for the IBVP \eqref{eq:Q-model}-\eqref{eq:Neu}-\eqref{eq:initial}. The main goal of this paper is to prove that if the initial datum $u_0$ depends on $\e$ and has an $N$-transition layer structure (see Definition \ref{def:TLS} below) then the solution to \eqref{eq:Q-model}-\eqref{eq:Neu}-\eqref{eq:initial} maintains the same $N$-transition layer structure for an exponentially long time. In this fashion, we show that the phenomenon of \emph{metastability} is still present in reaction-diffusion problems 
where the diffusion is nonlinear and it is given by either the strongly saturating diffusion function \eqref{eq:curv+}, or by the mean curvature (in the Lorentz-Minkowski space) unbounded diffusion function \eqref{eq:curv-}. 
For that purpose, it is to be noticed that in the case of a generic reaction-diffusion equation of the form \eqref{eq:Q-model}, the energy functional is given by
\begin{equation*}
	E[u]=\int_a^b\left[\tilde{Q}(u_x)+F(u)\right]\,dx,
\end{equation*}
where $\tilde{Q}'(s)=Q(s)$. Indeed, one can prove (see Lemma \ref{lem:energy}) that if $u$ is a solution to \eqref{eq:Q-model} with homogeneous Neumann boundary conditions \eqref{eq:Neu}, then 
\begin{equation}\label{eq:energy-var-intro}
	\frac{d}{dt} E[u](t)=-\int_a^b u_t^2(x,t)\,dx.
\end{equation}
In the case of the choices \eqref{eq:curv+}-\eqref{eq:curv-} for the diffusion function, we clearly have
\begin{equation*}
	\tilde{Q}(s)=c\pm\frac{\sqrt{1\pm\e^4s^2}}{\e^2}, \qquad \qquad c\in\R,
\end{equation*}
where the sign $+$  and $-$ corresponds to \eqref{eq:curv+}  and to \eqref{eq:curv-}, respectively.

In order to apply the same strategy of \cite{BrKo90}, we choose $\tilde{Q}(0)=0$ and use the renormalized energies,
\begin{equation}\label{eq:energy+}
	E_\e[u]=\int_a^b\left[\frac{\sqrt{1+\e^4u_x^2}-1}{\e^3}+\frac{F(u)}\e\right]\,dx,
\end{equation}
in the case where $Q$ is given by \eqref{eq:curv+}, and 
\begin{equation}\label{eq:energy-}
	\mathcal{E}_\e[u]=\int_a^b\left[\frac{1-\sqrt{1-\e^4u_x^2}}{\e^3}+\frac{F(u)}\e\right]\,dx,
\end{equation}
if $Q$ is given by \eqref{eq:curv-}.

The goal is to prove that, as in the \emph{linear} case, both energies $E_\e[u]$ and $\mathcal{E}_\e[u]$ satisfy the following property:
if $u^\e$ converges in $L^1(a,b)$ to a step function $v$,
taking only the values $\pm1$ and having exactly $N$ jumps, then
\begin{equation*}
	\liminf_{\e\to0^+} E_\e[u^\e]\geq Nc_0, \qquad \mbox{ and } \qquad \liminf_{\e\to0^+} \mathcal{E}_\e[u^\e]\geq Nc_0, 
\end{equation*}
with the equality holding for a particular choice of $u^\e$.
More precisely, we will prove that for any function $u$ sufficiently close in $L^1(a,b)$ to $v$, the following inequality holds
\begin{equation*}
	E_\varepsilon[u]\geq Nc_\e-C\exp(-A/\varepsilon),
\end{equation*}
for some $c_\e>0$ which converges  to $c_0$ as $\e\to0$ (see Proposition \ref{prop:lower}). A similar inequality holds for $\mathcal{E}_\e[u]$, see Proposition \ref{prop:lower-} below.
Such lower bounds are the key points to prove the persistence of metastable patterns for \eqref{eq:Q-model} in the cases \eqref{eq:curv+}-\eqref{eq:curv-}; in order to prove them we use two different inequalities (see \eqref{eq:strangeineq+} and \eqref{eq:strangeineq-}), 
which play the same role of Young inequality in the linear case \eqref{eq:Al-Ca}.
We underline that such results on the energy functionals, $E_\e[u]$ and $\mathcal{E}_\e[u]$, hold for a generic function $u\in H^1(a,b)$ and that
equation \eqref{eq:Q-model} plays no role.
Thanks to these variational results and equality \eqref{eq:energy-var-intro} we are able to apply the energy approach of Bronsard and Kohn \cite{BrKo90},
and by using the same procedure we prove the \emph{exponentially slow motion} of the solutions to \eqref{eq:Q-model}-\eqref{eq:Neu}. 

We close this Introduction by sketching a short plan of the paper; 
in Sections \ref{sec:met+} and \ref{sec:met-} we exploit the strategy described above for the two choices of diffusion fluxes \eqref{eq:curv+}  and \eqref{eq:curv-} under consideration, and prove that the solutions to \eqref{eq:Q-model} indeed exhibit a metastable dynamics. 
In particular, after showing the existence of metastable states having an $N$-transition layer structure (for more details see Definition \ref{def:TLS}), 
in Theorems \ref{thm:main} and \ref{thm:main-} we prove their persistence for an exponentially long time interval of the order $e^{C/\e}$, $C>0$.
Moreover, we provide an estimate for the speed of the transition layers, showing that it is (at most) exponentially small when $\e\to0$, see Theorem \ref{thm:interface} below.
Such results extend to the case of nonlinear diffusions as the ones in \eqref{eq:curv+}  and \eqref{eq:curv-} the results previously obtained for the standard Allen-Cahn equation.
In Section \ref{sec:num} we give numerical evidences of the rigorous results obtained in the Sections \ref{sec:met+}-\ref{sec:met-},
and we also show that metastability is a general phenomenon of equation \eqref{eq:Q-model},
that is, the metastable states are attractive for a large class of initial data.
Finally, we numerically show that the assumptions \eqref{eq:ass-F} on the potential $F$ are necessary for the appearance of metastability, as it happens in the linear case.

\section{Slow motion in the case of a saturating diffusion}\label{sec:met+}
The aim of this section is to show that there exist metastable states for the IBVP \eqref{eq:Q-model}-\eqref{eq:Neu}-\eqref{eq:initial}, 
when the function $Q$ is given by \eqref{eq:curv+}, and that such metastable states maintain the same unstable structure of the initial datum for an exponentially long time,
i.e. for a time $T_\e\geq e^{C/\e}$, with $C>0$ independent of $\e$.
Before stating the main result of this section, we present two preliminary lemmata, which are fundamental to develop
the energy approach mentioned in the Introduction.
The first lemma shows that the energy \eqref{eq:energy+} is a non-increasing function of time $t$ 
along the solutions of \eqref{eq:Q-model}-\eqref{eq:curv+} with homogeneous Neumann boundary conditions \eqref{eq:Neu}.
\begin{lem}\label{lem:energy}
Let $u\in C([0,T],H^2(a,b))$ be solution of \eqref{eq:Q-model}-\eqref{eq:Neu}, with $Q$ as in \eqref{eq:curv+}.
If $E_\e$ is the functional defined in \eqref{eq:energy+}, then 
\begin{equation}\label{eq:energy-dec}
	\frac{d}{dt}E_\e[u](t)=-\e^{-1}\|u_t(\cdot,t)\|^2_{{}_{L^2}},
\end{equation}
for any $t\in(0,T)$.
\end{lem}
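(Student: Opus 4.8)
The plan is to establish the energy dissipation identity \eqref{eq:energy-dec} by a direct computation: differentiate $E_\e[u](t)$ under the integral sign, integrate by parts exploiting the Neumann conditions \eqref{eq:Neu}, and then substitute the PDE \eqref{eq:Q-model}. Concretely, writing $E_\e[u] = \int_a^b \bigl[ \e^{-3}(\sqrt{1+\e^4 u_x^2}-1) + \e^{-1}F(u)\bigr]\,dx$, I would compute
\begin{equation*}
	\frac{d}{dt}E_\e[u](t) = \int_a^b \left[ \frac{\e^{4} u_x u_{xt}}{\e^{3}\sqrt{1+\e^4 u_x^2}} + \frac{F'(u)}{\e} u_t \right] dx = \frac1\e\int_a^b \left[ Q(\e^2 u_x)\, u_{xt} + F'(u)\, u_t \right] dx,
\end{equation*}
using that $Q(s) = s/\sqrt{1+s^2}$ so that $\e \cdot \tfrac{\e^4 u_x}{\e^3\sqrt{1+\e^4u_x^2}} = \tfrac{\e^2 u_x}{\sqrt{1+\e^4 u_x^2}} = Q(\e^2 u_x)$. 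Integrating the first term by parts in $x$ and using $u_x(a,t)=u_x(b,t)=0$ (hence $Q(\e^2 u_x)=0$ at the endpoints) yields
\begin{equation*}
	\frac{d}{dt}E_\e[u](t) = \frac1\e\int_a^b \left[ -Q(\e^2 u_x)_x + F'(u) \right] u_t \, dx = -\frac1\e\int_a^b u_t^2 \, dx,
\end{equation*}
where the last equality is exactly equation \eqref{eq:Q-model}. This gives \eqref{eq:energy-dec}.

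The steps in order are: (i) justify differentiation under the integral sign and the interchange $\partial_t u_x = \partial_x u_t$, which is where the regularity hypothesis $u \in C([0,T],H^2(a,b))$ enters — one should really assume enough regularity (e.g. $u_t \in L^2$, $u \in C^1$ in $t$ with values in $H^1$) for the formal manipulation to be rigorous, or note that the identity is first derived for smooth solutions and then extended by density/approximation; (ii) perform the algebraic simplification identifying the integrand's $u_x$-derivative with $Q(\e^2 u_x)$; (iii) integrate by parts and discard the boundary term using \eqref{eq:Neu}; (iv) invoke \eqref{eq:Q-model} to collapse the bracket into $-u_t$.

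The main obstacle is not the computation, which is routine, but the \emph{regularity/integrability} bookkeeping needed to make each manipulation legitimate: in the saturating case \eqref{eq:curv+} the integrand $\e^{-3}(\sqrt{1+\e^4 u_x^2}-1)$ grows only linearly in $|u_x|$, so $E_\e[u]$ is finite and differentiable along $H^1$-valued curves, but one must ensure $Q(\e^2 u_x)_x$ makes sense pointwise and that the boundary term truly vanishes — here $u\in H^2$ gives $u_x \in H^1 \hookrightarrow C([a,b])$, so $u_x(a,t)=u_x(b,t)=0$ is meaningful and $Q(\e^2 u_x)$ vanishes at the endpoints. A clean way to present this is to observe that $Q'(s) = (1+s^2)^{-3/2}$ is bounded, so $Q(\e^2 u_x) \in H^1(a,b)$ whenever $u_x \in H^1(a,b)$, legitimizing the integration by parts; the rest is the chain rule plus \eqref{eq:Q-model}. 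I would therefore keep the proof short, emphasizing the identification of the variational derivative of $E_\e$ with $\e^{-1}\bigl(-Q(\e^2 u_x)_x + F'(u)\bigr)$ and citing the Neumann condition for the boundary term.
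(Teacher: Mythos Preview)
Your proof is correct and follows essentially the same route as the paper: differentiate $E_\e[u]$ under the integral sign, integrate the $u_{xt}$ term by parts using the Neumann conditions, and substitute \eqref{eq:Q-model} to collapse the integrand to $-u_t^2/\e$. The paper's proof is terser (it omits your discussion of regularity and the identification with $Q(\e^2 u_x)$), but the computation is line-for-line the same.
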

\begin{proof}
By differentiating with respect to time \eqref{eq:energy+}, we obtain 
\begin{equation*}
	\frac{d}{dt}E_\e[u]=\frac1\e\int_a^b\left[\frac{\e^2 u_x}{\sqrt{1+\e^4u_x^2}}u_{xt}+F'(u)u_t\right]\,dx,
\end{equation*}
Integrating by parts and using the boundary conditions \eqref{eq:Neu}, we get
\begin{equation*}
	\frac{d}{dt}E_\e[u]=-\frac1\e\int_a^b\left[\left(\frac{\e^2 u_x}{\sqrt{1+\e^4u_x^2}}\right)_x-F'(u)\right]u_t\,dx.
\end{equation*}
Since $u$ satisfies the equation \eqref{eq:Q-model}, we end up with \eqref{eq:energy-dec}.
\end{proof}
In the following lemma we prove an inequality which plays in \eqref{eq:Q-model}-\eqref{eq:curv+} the same role that the well-known Young inequality plays in the metastability analysis of the classical reaction-diffusion model \eqref{eq:Al-Ca}.
\begin{lem}
Let $\e,L>0$. 
Then
\begin{equation}\label{eq:strangeineq+}
	\frac{\sqrt{1+\e^4x^2}-1}{\e^3}+\frac{y}{\e}\geq |x|\sqrt{2y-\e^2y^2},
\end{equation}
for any $(x,y)\in[-L,L]\times[0,2\e^{-2}]$.
\end{lem}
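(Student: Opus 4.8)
The plan is to reduce the two-variable inequality \eqref{eq:strangeineq+} to a one-variable estimate by first optimizing (or at least cleverly bounding) in one of the variables. Observe that for fixed $x$, the left-hand side is an affine function of $y$ with positive slope $\e^{-1}$, whereas the right-hand side $|x|\sqrt{2y-\e^2 y^2}$ is concave in $y$ on the admissible interval $[0,2\e^{-2}]$ (its second derivative in $y$ is $-|x|\e^2(2y-\e^2y^2)^{-3/2}\cdot(\text{positive})$, hence negative). Thus the difference LHS $-$ RHS is convex in $y$, and it suffices to check the inequality where this difference is minimized; alternatively, since RHS is concave one may bound it above by its tangent line at a conveniently chosen point. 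A clean choice is to write $2y-\e^2y^2 = \e^{-2}\big(1-(1-\e^2 y)^2\big)$ and set $t:=\e^2 y\in[0,2]$, so that the claimed inequality becomes, after multiplying through by $\e$,
\begin{equation*}
\frac{\sqrt{1+\e^4 x^2}-1}{\e^2}+\frac{t}{\e^2}\;\ge\;|x|\sqrt{t(2-t)} .
\end{equation*}

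Next I would introduce $a:=\e^2|x|\ge 0$ (note $a\le \e^2 L$, but in fact no upper bound on $a$ is needed) and divide by $\e^{-2}$; the inequality is equivalent to
\begin{equation*}
\sqrt{1+a^2}-1+t\;\ge\; a\sqrt{t(2-t)},\qquad t\in[0,2],\ a\ge 0 .
\end{equation*}
This is now a genuinely two-variable but scale-free inequality. Since the right-hand side is linear in $a$ while the left-hand side is convex in $a$ (indeed $\sqrt{1+a^2}$ is convex), for fixed $t$ the worst case is again determined by a single tangency condition; differentiating in $a$ gives the critical relation $a/\sqrt{1+a^2}=\sqrt{t(2-t)}$, i.e. $a^2/(1+a^2)=t(2-t)$, which yields $1+a^2 = 1/(1-t(2-t)) = 1/(1-t)^2$, so $\sqrt{1+a^2}=1/|1-t|$ and $a = \sqrt{t(2-t)}/|1-t|$ at the critical point. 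Substituting back, the left-hand side minus right-hand side at criticality equals
\begin{equation*}
\frac{1}{|1-t|}-1+t-\frac{t(2-t)}{|1-t|}=\frac{1-t(2-t)}{|1-t|}-1+t=|1-t|-(1-t)\ge 0 ,
\end{equation*}
using $1-t(2-t)=(1-t)^2$. So the minimum over $a\ge0$ is nonnegative for every $t\in[0,2]$, which proves the inequality. (The boundary cases $t=1$, where the critical $a$ escapes to $+\infty$, and $a=0$ are handled trivially and directly: at $a=0$ the RHS is $0\le t$; as $a\to\infty$ with $t=1$ the LHS grows like $a$ while RHS equals $a$, and the lower-order term $-1+1=0$ keeps the inequality tight but valid.)

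The only genuine obstacle is organizing the case analysis around the tangency point so that the argument is rigorous without assuming differentiability at awkward spots: one must confirm that, for each fixed $t$, the function $a\mapsto \sqrt{1+a^2}-1+t-a\sqrt{t(2-t)}$ is convex with a unique interior minimizer (when $t\ne 1$) or is a nonnegative affine-plus-convex function (when $t=1$), and then that the computed minimum value $|1-t|-(1-t)$ is manifestly $\ge 0$. An alternative, perhaps cleaner, route that avoids calculus entirely is to apply Young's inequality in the form $a\sqrt{t(2-t)}=a\,|1-t|\cdot\frac{\sqrt{t(2-t)}}{|1-t|}\le \tfrac12\big(\lambda a^2 + \lambda^{-1} t(2-t)/(1-t)^2\big)$ with $\lambda$ chosen to match $\sqrt{1+a^2}-1$; but since the term $\sqrt{1+a^2}-1$ is not quadratic, the direct convexity argument above is the most transparent and is the one I would write up. Either way, once \eqref{eq:strangeineq+} is in hand, combined with Lemma \ref{lem:energy} it plays exactly the role of Young's inequality, giving the lower bound $E_\e[u]\ge |u_x|\sqrt{2F(u)-\e^2 F(u)^2}$ pointwise after the substitution $y=F(u)$ (valid since $F(u)\le 2\e^{-2}$ on the relevant range), which is the seed of Proposition \ref{prop:lower}.
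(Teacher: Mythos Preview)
Your proof is correct and proceeds by a genuinely different organization than the paper's. After the substitution $a=\e^2|x|$, $t=\e^2 y$, you reduce to the scale-free inequality $\sqrt{1+a^2}-1+t\ge a\sqrt{t(2-t)}$ and exploit convexity in $a$ (for fixed $t$) to locate the unique minimizer and compute the minimum value $|1-t|-(1-t)\ge 0$ directly. The paper instead works with the two-variable function $f(x,y)=\sqrt{1+\e^4x^2}-1+\e^2y-\e^3x\sqrt{2y-\e^2y^2}$ on the compact rectangle $[0,L]\times[0,2\e^{-2}]$: it computes $\nabla f$, finds the interior critical curve $x=g(y)$, checks that $f=0$ along it, and then verifies $f\ge 0$ on each of the four boundary edges. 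Your route buys two things: the substitution makes $\e$ and $L$ disappear (and, as you observe, no upper bound on $|x|$ is actually needed), and the one-variable convexity argument replaces the boundary case analysis entirely. The paper's approach is the classical ``minimum on a compact set is attained at a critical point or on the boundary'' argument, which is straightforward but requires tracking the sign of $f$ along $\partial D$. Both proofs identify the same equality set (your critical $a=\sqrt{t(2-t)}/|1-t|$ is exactly the paper's $x=g(y)$ after the change of variables), so they are close in spirit; yours is slightly more economical. One small remark: your closing paragraph about the Young-inequality alternative and the role of the lemma in Proposition~\ref{prop:lower} is context rather than proof, and can be dropped from the write-up of the lemma itself.
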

\begin{proof}
To prove \eqref{eq:strangeineq+} it is sufficient to study the sign of the function
\begin{equation*}
	f(x,y):=\sqrt{1+\e^4x^2}-1+\e^2y-\e^3x\sqrt{2y-\e^2y^2},
\end{equation*}
in the domain $D:=[0,L]\times[0,2\e^{-2}]$.
For any $(x,y)\in\mathring{D}$ we have
\begin{align*}
	& f_x(x,y)=\frac{\e^4x}{\sqrt{1+\e^4x^2}}-\e^3\sqrt{2y-\e^2y^2}, \\
	& f_y(x,y)=\e^2\left(1-\e x\frac{1-\e^2y}{\sqrt{2y-\e^2y^2}}\right).
\end{align*}
It follows that $\nabla f(x,y)=0$ if and only if 
\begin{equation*}
	x=\frac{\sqrt{2y-\e^2y^2}}{\e(1-\e^2y)}=:g(y), \qquad \qquad y\in(0,\e^{-2})\cup(\e^{-2},2\e^{-2}).
\end{equation*}
Notice that $f(g(y),y)=0$ for any $y\in(0,\e^{-2})\cup(\e^{-2},2\e^{-2})$, and so $f=0$ in all the critical points.
Let us compute the minimum of $f$ in $\partial D$.
First, we have
\begin{equation*}
	f(x,0)=\sqrt{1+\e^4x^2}-1, \qquad x\in[0,L], \qquad \qquad f(0,y)=\e^2y, \qquad y\in[0,2\e^{-2}].
\end{equation*}
Next, 
\begin{equation*}
	f(x,2\e^{-2})=\sqrt{1+\e^4x^2}+1, \qquad x\in[0,L].
\end{equation*}
Finally, $f'(L,y)\leq0$ for any $y\in(0,g^{-1}(L)]$ and $f'(L,y)\geq0$ for any $y\in[g^{-1}(L),2\e^{-2})$ and $f(L,g^{-1}(L))=0$.
Therefore, $f$ is non negative on $\partial D$ and since $f=0$ in all the critical points,
we have that $f$ is non negative in $D$ and the proof is complete.
\end{proof}
The inequality \eqref{eq:strangeineq+} is crucial because it allows us to state that if $\bar{u}$ is a monotone function
connecting the two stable points $+1$ and $-1$ and 
\begin{equation*}
	\max_{s\in[-1,1]} F(s)\leq2\e^{-2},
\end{equation*} then the energy defined in \eqref{eq:energy+} satisfies
\begin{equation}\label{eq:c_eps}
	E_\e[\bar{u}]\geq\int_a^b|\bar{u}'|\sqrt{F(\bar{u})\left[2-\e^2F(\bar{u})\right]}\,dx=\int_{-1}^{+1}\sqrt{F(s)\left[2-\e^2F(s)\right]}\,ds=:c_\e.
\end{equation}
As we will see, the positive constant $c_\e$ represents the minimum energy to have a transition between $-1$ and $+1$. It is to be observed that
\begin{equation*}
	\lim_{\e\to0} c_\e=\int_{-1}^{+1}\sqrt{2F(s)}\,ds=:c_0,
\end{equation*}
which is the minimum energy in the case of the classical Allen--Cahn equation (see \cite{BrKo90}).

We now give the definition of a function with a \emph{transition layer structure}.
\begin{defn}\label{def:TLS}
Let us fix $N\in\mathbb{N}$ and a {\it piecewise constant function} $v$ with $N$ transitions as follows:
\begin{equation}
\label{vstruct}
\begin{minipage}[c]{4in}
$v : [a,b] \rightarrow \{-1,+1\}$ with $N$ jumps located at $x = h_j$, $j = 1, \ldots, N$, with $a<h_1<h_2<\cdots<h_N<b$, and $r > 0$ is such that $(h_i-r,h_i+r)\cap(h_j-r,h_j+r)=\varnothing$ for $i \neq j$, $a\leq h_1-r$ and $h_N+r\leq b$.
\end{minipage}
\end{equation}
We say that a function $u^\e\in H^1(a,b)$ has an \emph{$N$-transition layer structure} if 
\begin{equation}\label{eq:ass-u0}
	\lim_{\varepsilon\rightarrow 0} \|u^\varepsilon-v\|_{{}_{L^1}}=0,
\end{equation}
and there exist $C>0$ and $A\in(0,r\sqrt{2\lambda})$, $\lambda=\min\{F''(\pm1)\}$ (independent on $\e$) such that
\begin{equation}\label{eq:energy-ini}
	E_\varepsilon[u^\varepsilon]\leq Nc_\e+C\exp(-A/\e),
\end{equation}
for any $\varepsilon\ll1$, where the energy $E_\e$ and the positive constant $c_ \e$ are defined in \eqref{eq:energy+} and in \eqref{eq:c_eps}, respectively.
\end{defn}
Observe that condition \eqref{eq:ass-u0} fixes the number of transitions and their relative positions in the limit $\varepsilon\to0$,
while condition \eqref{eq:energy-ini} requires that the energy at the time $t=0$ exceeds at most by $C\exp(-A/\e)$, the minimum possible to have these $N$ transitions. 
Moreover, from \eqref{eq:energy-dec} it follows that if the initial datum $u_0^\e$ satisfies \eqref{eq:energy-ini}, 
then the solution $u^\e(\cdot,t)$ satisfies the same inequality for all times $t>0$.
Therefore, we only need to prove that the solution $u^\e(\cdot,t)$ satisfies \eqref{eq:ass-u0} for an exponentially long time,
and this is the main result of this section.

\begin{thm}[metastable dynamics with strong saturating diffusion]\label{thm:main}
Assume that $F\in C^3(\R)$ satisfies \eqref{eq:ass-F} and define $\lambda:=\min\{F''(\pm1)\}$. 
Let $v$ be as in \eqref{vstruct} and let $A\in(0,r\sqrt{2\lambda})$. 
If $u^\varepsilon$ is the solution of \eqref{eq:Q-model}-\eqref{eq:Neu}-\eqref{eq:initial}
with $Q$ as in \eqref{eq:curv+} and initial datum $u_0^{\varepsilon}$ satisfying \eqref{eq:ass-u0} and \eqref{eq:energy-ini}, then, 
\begin{equation}\label{eq:limit}
	\sup_{0\leq t\leq m\exp(A/\varepsilon)}\|u^\varepsilon(\cdot,t)-v\|_{{}_{L^1}}\xrightarrow[\varepsilon\rightarrow0]{}0,
\end{equation}
for any $m>0$.
\end{thm}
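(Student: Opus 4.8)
The plan is to follow the Bronsard--Kohn energy method, adapting it to the nonlinear energy $E_\e$. The proof proceeds by contradiction: suppose \eqref{eq:limit} fails, so there exist $m>0$, $\delta>0$ and a sequence $\e\to0$ together with times $t_\e\in[0,m\exp(A/\e)]$ such that $\|u^\e(\cdot,t_\e)-v\|_{{}_{L^1}}\geq\delta$. Without loss of generality one may take $t_\e$ to be the first such time, so that $\|u^\e(\cdot,t)-v\|_{{}_{L^1}}<\delta$ for $t<t_\e$ and $=\delta$ at $t=t_\e$ (by continuity in time of the $L^1$ norm). Two complementary estimates are then needed: a lower bound showing that over the time interval $[t_\e-\tau_\e,\,t_\e]$ the function $u^\e$ must travel a definite distance in $L^2$, hence (by Cauchy--Schwarz in time) $\int_{t_\e-\tau_\e}^{t_\e}\|u_t^\e\|_{{}_{L^2}}^2\,dt$ is bounded below by roughly $\delta^2/\tau_\e$; and, on the other hand, an upper bound on that same dissipation integral coming from the energy identity \eqref{eq:energy-dec}, namely $\int_0^{t_\e}\|u_t^\e\|_{{}_{L^2}}^2\,dt=\e\big(E_\e[u_0^\e]-E_\e[u^\e(\cdot,t_\e)]\big)\leq E_\e[u_0^\e]-E_\e[u^\e(\cdot,t_\e)]$ up to the factor $\e$. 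The crux is to show the right-hand side here is exponentially small, which is where Proposition \ref{prop:lower} enters.

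The key step is therefore the following lemma, to be proved along the way: there exists $\hat\delta>0$ such that whenever $\|u-v\|_{{}_{L^1}}\leq\hat\delta$ one has $E_\e[u]\geq Nc_\e-C\exp(-A/\e)$; this is exactly Proposition \ref{prop:lower}. Assuming it, and using \eqref{eq:energy-ini} for the initial datum, one gets for all $t\in[0,t_\e]$ (noting $\|u^\e(\cdot,t)-v\|_{{}_{L^1}}\leq\delta\leq\hat\delta$ once $\delta$ is chosen small enough)
\[
	\e^{-1}\int_0^{t_\e}\|u_t^\e(\cdot,t)\|_{{}_{L^2}}^2\,dt=E_\e[u_0^\e]-E_\e[u^\e(\cdot,t_\e)]\leq\big(Nc_\e+C\exp(-A/\e)\big)-\big(Nc_\e-C\exp(-A/\e)\big)=2C\exp(-A/\e).
\]
Hence $\int_0^{t_\e}\|u_t^\e\|_{{}_{L^2}}^2\,dt\leq 2C\e\exp(-A/\e)$. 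Now choose the sliding window length $\tau_\e:=\hat m\exp(A/\e)$ for a suitable fixed $\hat m$; if $t_\e<\tau_\e$ one integrates from $0$, otherwise from $t_\e-\tau_\e$. By Cauchy--Schwarz in $t$,
\[
	\delta\leq\|u^\e(\cdot,t_\e)-u^\e(\cdot,t_\e-\tau_\e)\|_{{}_{L^1}}\leq\sqrt{b-a}\,\Big\|\int_{t_\e-\tau_\e}^{t_\e}u_t^\e\,dt\Big\|_{{}_{L^2}}\leq\sqrt{b-a}\,\sqrt{\tau_\e}\,\Big(\int_{t_\e-\tau_\e}^{t_\e}\|u_t^\e\|_{{}_{L^2}}^2\,dt\Big)^{1/2},
\]
where the first inequality holds because $u^\e(\cdot,t_\e-\tau_\e)$ is still within $L^1$-distance $<\delta$ of $v$ (if $t_\e\geq\tau_\e$; if $t_\e<\tau_\e$ replace $u^\e(\cdot,t_\e-\tau_\e)$ by $u_0^\e$ and use \eqref{eq:ass-u0}), so the triangle inequality forces $u^\e(\cdot,t_\e)$ and the earlier profile to differ by at least $\delta-\,o(1)$, say $\geq\delta/2$. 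Combining the two displays yields $(\delta/2)^2\leq(b-a)\,\hat m\exp(A/\e)\cdot 2C\e\exp(-A/\e)=2C(b-a)\hat m\,\e\to0$, a contradiction for $\e$ small. This proves \eqref{eq:limit} on $[0,m\exp(A/\e)]$ for the chosen $m$, and since $m$ was arbitrary the theorem follows.

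The main obstacle is clearly the lower bound Proposition \ref{prop:lower}, i.e. showing that $L^1$-proximity to the $N$-jump step function $v$ forces $E_\e[u]$ to be at least $Nc_\e$ minus an exponentially small error; this is where the nonlinear analogue \eqref{eq:strangeineq+} of Young's inequality replaces the classical argument, and where one must localize near each transition, control the energy contribution away from the layers using the convexity of $F$ near $\pm1$ (yielding the exponential rate governed by $A<r\sqrt{2\lambda}$), and use the fact that $u$ must actually cross from near $-1$ to near $+1$ on each of the $N$ intervals. However, this proposition is already available from the excerpt, so in the proof of Theorem \ref{thm:main} itself the only delicate points are (i) correctly setting up the first-exit-time argument and handling the case $t_\e<\tau_\e$ separately, and (ii) making sure the window length $\tau_\e$ is chosen so that the product of $\tau_\e$ with the exponentially small dissipation bound still tends to zero — which works precisely because the dissipation bound carries the matching factor $\exp(-A/\e)$.
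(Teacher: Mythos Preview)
Your argument is essentially the same as the paper's: both combine the energy identity \eqref{eq:energy-dec}, the lower bound of Proposition~\ref{prop:lower}, the upper bound \eqref{eq:energy-ini}, and H\"older/Cauchy--Schwarz in time. The paper packages the dissipation bound as a separate proposition (showing $\int_0^{C_1\e^{-1}\exp(A/\e)}\|u_t^\e\|_{{}_{L^2}}^2\,dt\leq C_2\e\exp(-A/\e)$ via a bootstrap on the first time $\int_0^{\hat T}\|u_t^\e\|_{{}_{L^1}}\,dt$ reaches $\delta/2$) and then concludes \emph{directly} via $\|u^\e(\cdot,t)-v\|_{{}_{L^1}}\leq\|u^\e(\cdot,t)-u_0^\e\|_{{}_{L^1}}+\|u_0^\e-v\|_{{}_{L^1}}$, rather than by contradiction; the content is identical.

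One small error in your write-up: in the case $t_\e\geq\tau_\e$, knowing only that $\|u^\e(\cdot,t_\e-\tau_\e)-v\|_{{}_{L^1}}<\delta$ and $\|u^\e(\cdot,t_\e)-v\|_{{}_{L^1}}=\delta$ does \emph{not}, by the triangle inequality, force the two profiles to be far apart (they could sit on the same side of $v$ at distances $\delta$ and $\delta-\eta$). The sliding window is in fact unnecessary: since $t_\e\leq m\exp(A/\e)$ anyway, simply compare with $u_0^\e$ over $[0,t_\e]$. From \eqref{eq:ass-u0} you get $\|u^\e(\cdot,t_\e)-u_0^\e\|_{{}_{L^1}}\geq\delta-o(1)$, while Cauchy--Schwarz gives $\|u^\e(\cdot,t_\e)-u_0^\e\|_{{}_{L^1}}\leq\sqrt{(b-a)\,t_\e}\big(\int_0^{t_\e}\|u_t^\e\|_{{}_{L^2}}^2\,dt\big)^{1/2}\leq\sqrt{2C(b-a)m\,\e}\to0$, which is the contradiction you want.
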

As it was pointed out in Section \ref{sec:intro}, the crucial step in the proof of Theorem \ref{thm:main} is to prove a particular lower bound on the energy.
Such result is purely variational in character and equation \eqref{eq:Q-model} plays no role in its proof.
\begin{prop}\label{prop:lower}
Assume that $F\in C^3(\R)$ satisfies \eqref{eq:ass-F}. 
Let $v$ be as in \eqref{vstruct}, $\lambda:=\min\{F''(\pm1)\}>0$ and $A\in(0,r\sqrt{2\lambda})$.
Then, there exist $\e_0,C,\delta>0$ (depending only on $F,v$ and $A$) such that if $u\in H^1(a,b)$ satisfies 
\begin{equation}\label{eq:u-v}
	\|u-v\|_{{}_{L^1}}\leq\delta,
\end{equation}
then for any $\e\in(0,\e_0)$,
\begin{equation}\label{eq:lower}
	E_\varepsilon[u]\geq Nc_\e-C\exp(-A/\varepsilon),
\end{equation}
where $E_\e$ and $c_\e$ are defined in \eqref{eq:energy+} and \eqref{eq:c_eps}, respectively.
\end{prop}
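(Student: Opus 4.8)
The plan is to adapt the Bronsard–Kohn lower-bound argument to the saturating-diffusion energy $E_\e$, using the modified Young-type inequality \eqref{eq:strangeineq+} in place of the classical one. The key structural fact is that, away from the transition points $h_j$, a function $u$ that is $L^1$-close to $v$ must spend ``most'' of its mass near $\pm1$, so the potential term $F(u)/\e$ in the energy is large unless $u$ is itself close to $\pm 1$ there. The strategy is to localize: choose small intervals $I_j:=(h_j-r,h_j+r)$ around the jumps and show that on each $I_j$ the energy contributes at least $c_\e$ minus an exponentially small error, while on the complement $[a,b]\setminus\bigcup_j I_j$ the energy is nonnegative; summing over $j=1,\dots,N$ gives $E_\e[u]\geq Nc_\e - C\exp(-A/\e)$.

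First I would make the $L^1$-closeness quantitative: from \eqref{eq:u-v} with $\delta$ small, a Chebyshev/mean-value argument produces points $a_j\in(h_j-r,h_j-r/2)$ and $b_j\in(h_j+r/2,h_j+r)$ (one near each side of every jump, plus endpoints $a_0=a$, $b_N = b$ adjusted to the boundary) at which $u$ is within, say, a fixed small distance of the appropriate value $\pm1$; the point is that $u$ cannot be far from $\pm 1$ on a set of measure comparable to $r$. On the interval between $b_{j-1}$ and $a_j$ (where $v$ is constant, equal to $\pm1$) I would use that $u$ starts and ends near that constant value and apply the inequality \eqref{eq:strangeineq+} together with $F''(\pm1)\geq\lambda$ to bound the energy contribution below by something like $-C\exp(-A/\e)$: concretely, on such an interval either $u$ stays within a neighborhood of $\pm1$ where $F(u)\geq \tfrac{\lambda}{4}(u\mp1)^2$, and a one-dimensional ODE comparison (balancing $\sqrt{\e^{-2}\text{(gradient term)}}$ against $\sqrt{F(u)/\e}$ via \eqref{eq:strangeineq+}) shows the layer cost is at least $c_\e$ if $u$ ever leaves the neighborhood, which forces $u$ to stay close, giving an exponentially small correction governed by the exponent $r\sqrt{2\lambda}>A$; or $u$ does leave, in which case that interval already contributes a full $c_\e$, which only helps. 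On each $I_j$ containing a genuine jump, $u$ passes from near $-1$ to near $+1$ (or vice versa), and \eqref{eq:c_eps}–\eqref{eq:strangeineq+} give a contribution of at least $c_\e$ up to the same exponentially small error. Adding the $N$ ``jump'' contributions and discarding the nonnegative leftover yields \eqref{eq:lower}.

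The main obstacle I expect is the sharp exponential bookkeeping in the ``constant'' intervals: one must show that if $u(a_j)$ and $u(b_j)$ are each within a small fixed distance of $\pm1$ but $u$ is not required to equal $\pm1$, then the deficit $c_\e - \int_{a_j}^{b_j}(\cdots)\,dx$ is controlled by $C\exp(-A/\e)$ with the correct constant $A < r\sqrt{2\lambda}$. This requires (i) replacing $F$ near $\pm1$ by its quadratic lower bound $\tfrac{\lambda}{4}(u\mp1)^2$ at the cost of shrinking the neighborhood, (ii) solving or estimating the resulting linear problem $\e^2 u'' \approx \tfrac{\lambda}{2}(u\mp 1)$ whose decay rate is $\sqrt{\lambda/(2)}/\e \cdot \sqrt{2} = \sqrt{\lambda}/\e$ — more carefully, the comparison in \eqref{eq:strangeineq+} shows the relevant exponential rate over a half-interval of length $r/2$ scaled appropriately is $\sim (r)\sqrt{2\lambda}/\e$ after accounting for the two sides — and (iii) absorbing the discrepancy between $\sqrt{F(s)[2-\e^2 F(s)]}$ and $\sqrt{2F(s)}$ into constants since $c_\e\to c_0$. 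A secondary technical point is that \eqref{eq:strangeineq+} requires the argument $y=F(u)$ to lie in $[0,2\e^{-2}]$; since $\max_{[-1,1]}F$ is fixed and $\e$ is small this holds on the range of values $u$ takes once we know $u$ is close to $v$, but one should note that \eqref{eq:u-v} alone does not bound $\|u\|_\infty$, so the localization to neighborhoods of $\pm1$ (where $F$ is bounded) must be carried out before invoking the inequality, exactly as in the classical argument where one first truncates $u$ to the interval $[-1,1]$ or a fixed neighborhood thereof without increasing the energy.
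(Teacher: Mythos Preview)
Your overall strategy---localize to $I_j=(h_j-r,h_j+r)$, pin $u$ near $\pm1$ at two points via the $L^1$ bound, apply \eqref{eq:strangeineq+} between them, and run an ODE/minimizer comparison for the exponential correction---is exactly the paper's. But the accounting is misplaced. The energy density in \eqref{eq:energy+} is everywhere nonnegative, so the intervals $(b_{j-1},a_j)$ where $v$ is constant contribute $\geq0$ and are simply discarded; the bound ``$\geq-C\exp(-A/\e)$'' there is vacuous and no ODE argument is needed on them. The deficit lives entirely inside each $I_j$: the Young-type bound on the transition sub-interval between the pinning points gives only $c_\e-\alpha_\e-\beta_\e$, where $\alpha_\e,\beta_\e$ are the missing tails $\int_{u(\cdot)}^{\pm1}\sqrt{F(s)[2-\e^2F(s)]}\,ds$, of fixed order $O(\rho_2^2)$ and \emph{not} exponentially small. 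It is the energy on the two tail sub-intervals of $I_j$---between the pinning points and $h_j\pm r$---that must supply $\alpha_\e+\beta_\e-C\exp(-A/\e)$ via the minimizer argument.

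This forces the pinning points to sit \emph{close} to $h_j$, not near the edges of $I_j$. With your choice $b_j\in(h_j+r/2,h_j+r)$ the tail $(b_j,h_j+r)$ inside $I_j$ has length $<r/2$, and the comparison $\psi''\geq(\mu^2/\e^2)\psi$ over it yields at best $\exp\!\big(-\tfrac{r}{2}\sqrt{2\lambda}/\e\big)$, covering only $A<\tfrac{r}{2}\sqrt{2\lambda}$, not the full range in the statement. The paper instead fixes $\hat r\in(0,r)$ with $A\leq(r-\hat r)\sqrt{2\lambda-2\nu\rho_1}$ and locates $r_\pm\in(0,\hat r)$ with $|u(h_j\pm r_\pm)\mp1|<\rho_2$, so each tail has length $\geq r-\hat r$. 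Then on $(h_j+r_+,h_j+r)$ the constrained minimizer $z$ satisfies its Euler--Lagrange equation, and comparing $\psi=(z-1)^2$ against the explicit solution of $\hat\psi''=(\mu^2/\e^2)\hat\psi$ with $\hat\psi'(h_j+r)=0$ gives $|z(h_j+r)-1|\leq\sqrt2\,\rho_2\exp(-A/(2\e))$, hence tail energy $\geq\beta_\e-\tfrac{C}{2N}\exp(-A/\e)$. Move your pinning points into $(h_j-\hat r,h_j+\hat r)$ and run the ODE argument on the tails \emph{inside} each $I_j$, and your sketch becomes the paper's proof.
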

\begin{proof}
Fix $u\in H^1(a,b)$ satisfying \eqref{eq:u-v}, and $\e>0$ such that $2-\e^2F(u)\geq0$ in $(a,b)$. 
Take $\hat r\in(0,r)$ and $\rho_1$ so small that 
\begin{equation}\label{eq:nu}
	A\leq(r-\hat r)\sqrt{2\lambda-2\nu\rho_1}, \qquad \mbox{ where } \quad \nu:=\sup\left\{|F'''(x)|, \,\, x\in[-1-\rho_1,1+\rho_1]\right\}.
\end{equation}
Then, choose $0<\rho_2 < \rho_1$  sufficiently small such that
\begin{equation}\label{eq:forrho2}
\begin{aligned}
	\int_{1-\rho_1}^{1-\rho_2}\sqrt{F(s)[2-\e^2F(s)]}\,ds&>\int_{1-\rho_2}^{1}\sqrt{F(s)[2-\e^2F(s)]}\,ds,  \\
	\int_{-1+\rho_2}^{-1+\rho_1}\sqrt{F(s)[2-\e^2F(s)]}\,ds&> \int_{-1}^{-1+\rho_2}\sqrt{F(s)[2-\e^2F(s)]}\,ds.
	\end{aligned}
\end{equation}
Now, we focus our attention on $h_i$, one of the discontinuous points of $v$. To fix ideas, 
let $v(h_i\pm r)=\pm1$, the other case being analogous.
We claim that there exist $r_+$ and $r_-$ in $(0,\hat r)$ such that
\begin{equation}\label{2points}
	|u(h_i+r_+)-1|<\rho_2, \qquad \quad \mbox{ and } \qquad \quad |u(h_i-r_-)+1|<\rho_2.
\end{equation}
Indeed, assume by contradiction that $|u-1|\geq\rho_2$ throughout $(h_i,h_i+\hat r)$; then
\begin{equation*}
	\delta\geq\|u-v\|_{{}_{L^1}}\geq\int_{h_i}^{h_i+\hat r}|u-v|\,dx\geq\hat r\rho_2,
\end{equation*}
and this leads to a contradiction if we choose $\delta\in(0,\hat r\rho_2)$.
Similarly, one can prove the existence of $r_-\in(0,\hat r)$ such that $|u(h_i-r_-)+1|<\rho_2$.

Now, we consider the interval $(h_i-r,h_i+r)$ and claim that
\begin{equation}\label{eq:claim}
	\int_{h_i-r}^{h_i+r}\left[\frac{\sqrt{1+\e^4u_x^2}-1}{\e^3}+\frac{F(u)}\e\right]\,dx\geq c_\e-\tfrac{C}N\exp(-A/\varepsilon),
\end{equation}
for some $C>0$ independent on $\e$.
Observe that from \eqref{eq:strangeineq+}, it follows that for any $a\leq c<d\leq b$,
\begin{equation}\label{eq:ineq}
	\int_c^d\left[\frac{\sqrt{1+\e^4u_x^2}-1}{\e^3}+\frac{F(u)}\e\right]\,dx \geq \left|\int_{u(c)}^{u(d)}\sqrt{F(s)[2-\e^2F(s)]}\,ds\right|.
\end{equation}
Hence, if $u(h_i+r_+)\geq1$ and $u(h_i-r_-)\leq-1$, then from \eqref{eq:ineq} we can conclude that
\begin{equation*}
	\int_{h_i-r_-}^{h_i+r_+}\left[\frac{\sqrt{1+\e^4u_x^2}-1}{\e^3}+\frac{F(u)}\e\right]\,dx\geq c_\e,
\end{equation*}
which implies \eqref{eq:claim}. On the other hand, from \eqref{eq:ineq} we obtain
\[
\int_{h_i-r_-}^{h_i+r_+}\left[\frac{\sqrt{1+\e^4u_x^2}-1}{\e^3}+\frac{F(u)}\e\right]\,dx \geq \int_{u(h_i - r_-)}^{u(h_i + r_+)} \sqrt{F(s)[2-\e^2F(s)]}\,ds,
\]
yielding, in turn,
%
\begin{align}
	\int_{h_i-r}^{h_i+r}\left[\frac{\sqrt{1+\e^4u_x^2}-1}{\e^3}+\frac{F(u)}\e\right]\,dx & 
	\geq \int_{h_i+r_+}^{h_i+r}\left[\frac{\sqrt{1+\e^4u_x^2}-1}{\e^3}+\frac{F(u)}\e\right]\,dx\notag \\ 
	& \quad + \int_{h_i-r}^{h_i-r_-}\left[\frac{\sqrt{1+\e^4u_x^2}-1}{\e^3}+\frac{F(u)}\e\right]\,dx \notag \\
	& \quad +\int_{-1}^{1}\sqrt{F(s)[2-\e^2F(s)]}\,ds\notag\\
	&\quad-\int_{-1}^{u(h_i-r_-)}\sqrt{F(s)[2-\e^2F(s)]}\,ds \notag \\
	& \quad-\int_{u(h_i+r_+)}^{1}\sqrt{F(s)[2-\e^2F(s)]}\,ds\notag \\
	&=:I_1+I_2+c_\e-\alpha_\e-\beta_\e. \label{eq:Pe}
\end{align}
Let us estimate the first two terms of \eqref{eq:Pe}. 
Regarding $I_1$, assume that $1-\rho_2<u(h_i+r_+)<1$ and consider the unique minimizer $z:[h_i+r_+,h_i+r]\rightarrow\R$ 
of $I_1$ subject to the boundary condition $z(h_i+r_+)=u(h_i+r_+)$.
If the range of $z$ is not contained in the interval $(1-\rho_1,1+\rho_1)$, then from \eqref{eq:ineq}, it follows that
\begin{equation}\label{E>fi}
	\int_{h_i+r_+}^{h_i+r}\left[\frac{\sqrt{1+\e^4z_x^2}-1}{\e^3}+\frac{F(z)}\e\right]\,dx \geq \int_{u(h_i+r_+)}^{1}\sqrt{F(s)[2-\e^2F(s)]}\,ds=\beta_\e,
\end{equation}
by the choice of $r_+$ and $\rho_2$. 
Suppose, on the other hand, that the range of $z$ is contained in the interval $(1-\rho_1,1+\rho_1)$. 
Then, the Euler-Lagrange equation for $z$ is
\begin{align*}
	&z''(x)=\e^{-2}F'(z(x))\left(1+\e^4z'(x)^2\right)^{3/2}, \quad \qquad x\in(h_i+r_+,h_i+r),\\
	&z(h_i+r_+)=u(h_i+r_+), \quad \qquad z'(h_i+r)=0.
\end{align*}
Denoting $\psi(x):=(z(x)-1)^2$, we have $\psi'=2(z-1)z'$ and 
\begin{equation*}
	\psi''(x)=2(z(x)-1)z''(x)+2z'(x)^2\geq\frac2{\varepsilon^2}(z(x)-1)F'(z(x))\left(1+\e^4z'(x)^2\right)^{3/2}.
\end{equation*}
Since $|z(x)-1|\leq\rho_1$ for any $x\in[h_i+r_+,h_i+r]$, using Taylor's expansion 
\begin{equation*}
	F'(z(x))=F''(1)(z(x)-1)+R,
\end{equation*}
where $|R|\leq\nu|z-1|^2/2$ with $\nu$ defined in \eqref{eq:nu}, we obtain
\[
\psi''(x) \geq \frac{2\lambda}{\varepsilon^2}(z(x)-1)^2-\frac{2\nu\rho_1}{\varepsilon^2}(z(x)-1)^2 \geq \frac{\mu^2}{\varepsilon^2}\psi(x),
\]
where $\mu=A/(r-\hat r)$ and having used \eqref{eq:nu}. 
Thus, $\psi$ satisfies
\begin{align*}
	\psi''(x)-\frac{\mu^2}{\varepsilon^2}\psi(x)\geq0, \quad \qquad x\in(h_i+r_+,h_i+r),\\
	\psi(h_i+r_+)=(u(h_i+r_+)-1)^2, \quad \qquad \psi'(h_i+r)=0.
\end{align*}
We compare $\psi$ with the solution $\hat \psi$ of
\begin{align*}
	\hat\psi''(x)-\frac{\mu^2}{\varepsilon^2}\hat\psi(x)=0, \quad \qquad x\in(h_i+r_+,h_i+r),\\
	\hat\psi(h_i+r_+)=(u(h_i+r_+)-1)^2, \quad \qquad \hat\psi'(h_i+r)=0,
\end{align*}
which can be explicitly calculated to be
\begin{equation*}
	\hat\psi(x)=\frac{(u(h_i+r_+)-1)^2}{\cosh\left[\frac\mu\varepsilon(r-r_+)\right]}\cosh\left[\frac\mu\varepsilon(x-(h_i+r))\right].
\end{equation*}
By the maximum principle, $\psi(x)\leq\hat\psi(x)$ so, in particular,
\begin{equation*}
	\psi(h_i+r)\leq\frac{(u(h_i+r_+)-1)^2}{\cosh\left[\frac\mu\varepsilon(r-r_+)\right]}\leq2\exp(-A/\varepsilon)(u(h_i+r_+)-1)^2.
\end{equation*}
Then, we have 
\begin{equation}\label{|z-v+|<exp}
	|z(h_i+r)-1|\leq\sqrt2\exp(-A/2\varepsilon)\rho_2.
\end{equation}
Now, by using Taylor's expansion for $F(s)$, we obtain
\begin{equation*}
	F(s)\leq(s-1)^2\left(\frac{F''(1)}2+\frac{o(|s-1|^2)}{|s-1|^2}\right).
\end{equation*}
Therefore, for $s$ sufficiently close to $s=1$ we have
\begin{equation}\label{W-quadratic}
	0\leq F(s)\leq\Lambda(s-1)^2,
\end{equation}
for some $\Lambda > 0$. Using \eqref{|z-v+|<exp} and \eqref{W-quadratic}, we obtain
\begin{equation}\label{fi<exp}
	\left|\int_{z(h_i+r)}^{1}\sqrt{F(s)[2-\e^2F(s)]}\,ds\right|\leq\sqrt{\Lambda/2}(z(h_i+r)-1)^2\leq\sqrt{2\Lambda}\,\rho_2^2\,\exp(-A/\varepsilon). 
\end{equation}
From \eqref{eq:ineq}-\eqref{fi<exp} it follows that, for some constant $C>0$, 
\begin{align}
	\int_{h_i+r_+}^{h_i+r}\left[\frac{\sqrt{1+\e^4z_x^2}-1}{\e^3}+\frac{F(z)}\e\right]\,dx &\geq \left|\int_{z(h_i+r_+)}^{1}\sqrt{F(s)[2-\e^2F(s)]}\,ds\,-\right.\nonumber \\
	&\qquad \qquad\left.\int_{z(h_i+r)}^{1}\sqrt{F(s)[2-\e^2F(s)]}\,ds\right| \nonumber\\
	& \geq\beta_\e-\tfrac{C}{2N}\exp(-A/\varepsilon). \label{E>fi-exp}
\end{align}
Combining \eqref{E>fi} and \eqref{E>fi-exp}, we get that the constrained minimizer $z$ of the proposed variational problem satisfies
\begin{equation*}	
	\int_{h_i+r_+}^{h_i+r}\left[\frac{\sqrt{1+\e^4z_x^2}-1}{\e^3}+\frac{F(z)}\e\right]\,dx \geq\beta_\e-\tfrac{C}{2N}\exp(-A/\varepsilon).
\end{equation*}
The restriction of $u$ to $[h_i+r_+,h_i+r]$ is an admissible function, so it must satisfy the same estimate and we have
\begin{equation}\label{eq:I1}
	I_1\geq\beta_\e-\tfrac{C}{2N}\exp(-A/\varepsilon).
\end{equation}
The term $I_2$ on the right hand side of \eqref{eq:Pe} is estimated similarly by analyzing  the interval $[h_i-r,h_i-r_-]$ 
and using the second condition of \eqref{eq:forrho2} to obtain the corresponding inequality \eqref{E>fi}.
The obtained lower bound reads
\begin{equation}\label{eq:I2}	
	I_2\geq\alpha_\e-\tfrac{C}{2N}\exp(-A/\varepsilon).
\end{equation}
Finally, by substituting \eqref{eq:I1} and \eqref{eq:I2} in \eqref{eq:Pe}, we deduce \eqref{eq:claim}.
Summing up all of these estimates for $i=1, \dots, N$, namely for all transition points, we end up with
\begin{equation*}
	E_\varepsilon[u]\geq\sum_{i=1}^N\int_{h_i-r}^{h_i+r}\left[\frac{\sqrt{1+\e^4u_x^2}-1}{\e^3}+\frac{F(u)}\e\right]\,dx\geq Nc_\e-C\exp(-A/\varepsilon),
\end{equation*}
and the proof is complete.
\end{proof}

Proposition \ref{prop:lower} permits to apply the energy approach introduced in \cite{BrKo90} and we can proceed as in \cite{BrKo90}, \cite{FLM19}.
\begin{prop}\label{prop:L2-norm}
Assume that $F\in C^3(\R)$ satisfies \eqref{eq:ass-F} and consider the solution $u^\e$ to \eqref{eq:Q-model}-\eqref{eq:Neu}-\eqref{eq:initial} 
with $Q$ given by \eqref{eq:curv+} and initial datum $u_0^{\varepsilon}$ satisfying \eqref{eq:ass-u0} and \eqref{eq:energy-ini}.
Then, there exist positive constants $\varepsilon_0, C_1, C_2>0$ (independent on $\varepsilon$) such that
\begin{equation}\label{L2-norm}
	\int_0^{C_1\varepsilon^{-1}\exp(A/\varepsilon)}\|u_t^\varepsilon\|^2_{{}_{L^2}}dt\leq C_2\varepsilon\exp(-A/\varepsilon),
\end{equation}
for all $\varepsilon\in(0,\varepsilon_0)$.
\end{prop}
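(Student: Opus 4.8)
The plan is to combine the energy monotonicity from Lemma \ref{lem:energy} with the lower bound of Proposition \ref{prop:lower}, exactly in the spirit of Bronsard--Kohn. First, I would integrate \eqref{eq:energy-dec} in time to obtain, for every $t>0$,
\begin{equation*}
	\e^{-1}\int_0^t\|u_t^\e(\cdot,s)\|^2_{{}_{L^2}}\,ds = E_\e[u_0^\e] - E_\e[u^\e(\cdot,t)].
\end{equation*}
The upper bound \eqref{eq:energy-ini} on the initial datum gives $E_\e[u_0^\e]\leq Nc_\e + C\exp(-A/\e)$. For the subtracted term I want to invoke Proposition \ref{prop:lower}, which requires $\|u^\e(\cdot,t)-v\|_{{}_{L^1}}\leq\delta$; so the estimate is conditional on the solution staying in the $\delta$-tube around $v$, and that is precisely what I will bootstrap below. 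Granting that, $E_\e[u^\e(\cdot,t)]\geq Nc_\e - C\exp(-A/\e)$, and the two bounds combine to yield
\begin{equation*}
	\int_0^t\|u_t^\e(\cdot,s)\|^2_{{}_{L^2}}\,ds \leq 2C\,\e\exp(-A/\e),
\end{equation*}
which is \eqref{L2-norm} with $C_2=2C$ — valid for as long as the $L^1$-tube condition persists.

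Next I would run the continuation/bootstrap argument that makes the tube condition self-sustaining up to time $C_1\e^{-1}\exp(A/\e)$. Define $T_\e$ to be the supremum of times $t$ for which $\|u^\e(\cdot,s)-v\|_{{}_{L^1}}\leq\delta$ for all $s\in[0,t]$; by \eqref{eq:ass-u0}, for $\e$ small the initial datum lies well inside the tube, so $T_\e>0$. On $[0,T_\e)$ the displayed energy estimate holds. Now I control the $L^1$-displacement in time by the elementary bound
\begin{equation*}
	\|u^\e(\cdot,t)-u^\e(\cdot,0)\|_{{}_{L^1}} \leq \int_0^t\|u_t^\e(\cdot,s)\|_{{}_{L^1}}\,ds \leq \sqrt{b-a}\int_0^t\|u_t^\e(\cdot,s)\|_{{}_{L^2}}\,ds \leq \sqrt{b-a}\,\sqrt{t}\left(\int_0^t\|u_t^\e\|^2_{{}_{L^2}}\,ds\right)^{1/2},
\end{equation*}
using Cauchy--Schwarz twice. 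Feeding in the energy estimate, the right-hand side is at most $\sqrt{b-a}\,\sqrt{t}\,\sqrt{2C\e}\,\exp(-A/2\e)$. Hence if $t\leq C_1\e^{-1}\exp(A/\e)$, this displacement is bounded by $\sqrt{2C(b-a)C_1}\,\exp(-A/2\e)$, which tends to $0$ as $\e\to0$; choosing $C_1$ and then $\e_0$ appropriately, this is strictly less than $\delta/2$, say, for all $\e\in(0,\e_0)$. Combined with $\|u^\e(\cdot,0)-v\|_{{}_{L^1}}<\delta/2$ (again from \eqref{eq:ass-u0}), we get $\|u^\e(\cdot,t)-v\|_{{}_{L^1}}<\delta$ strictly, which by continuity of $t\mapsto\|u^\e(\cdot,t)-v\|_{{}_{L^1}}$ forces $T_\e\geq C_1\e^{-1}\exp(A/\e)$. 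Therefore the energy estimate \eqref{L2-norm} holds on the full interval, completing the proof.

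The main obstacle is the circularity between the energy estimate and the $L^1$-tube condition: Proposition \ref{prop:lower} only applies while the solution is close to $v$, yet closeness is what we are trying to establish. The continuity-in-time/open-closed argument above resolves this, but it needs the a priori regularity $u^\e\in C([0,T],H^2(a,b))$ (so that $t\mapsto u^\e(\cdot,t)$ is continuous in $L^1$ and $t\mapsto\|u_t^\e\|_{{}_{L^2}}$ is integrable) — I would cite the relevant well-posedness facts for \eqref{eq:Q-model}-\eqref{eq:curv+} used throughout the section, or simply carry the argument within the class of solutions for which Lemma \ref{lem:energy} is stated. A secondary point to be careful about is that Proposition \ref{prop:lower} requires $2-\e^2F(u)\geq0$ on $(a,b)$; since $u^\e$ stays in the tube, $\|u^\e\|_{{}_{L^\infty}}$ is controlled (via $H^1\hookrightarrow L^\infty$ together with the energy bound), so $F(u^\e)$ is bounded uniformly and the condition holds for $\e$ small — I would note this explicitly when invoking the proposition. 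Everything else is bookkeeping of constants.
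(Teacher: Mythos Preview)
Your strategy matches the paper's proof: integrate the energy identity, bound $E_\e[u_0^\e]$ from above by \eqref{eq:energy-ini} and $E_\e[u^\e(\cdot,t)]$ from below by Proposition~\ref{prop:lower}, then close the loop via Cauchy--Schwarz in space and in time. The paper packages the bootstrap by letting $\hat T$ be the first time at which $\int_0^{\hat T}\|u_t^\e\|_{{}_{L^1}}\,dt=\tfrac12\delta$ (rather than your exit time $T_\e$ from the $\delta$-tube), but the two formulations are equivalent.

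There is, however, an arithmetic slip in your displacement bound. Plugging $t=C_1\e^{-1}\exp(A/\e)$ into $\sqrt{b-a}\,\sqrt{t}\,\sqrt{2C\e}\,\exp(-A/2\e)$ gives
\[
\sqrt{b-a}\,\sqrt{C_1}\,\e^{-1/2}\exp(A/2\e)\cdot\sqrt{2C}\,\e^{1/2}\,\exp(-A/2\e)=\sqrt{2C(b-a)C_1},
\]
so the exponentials and the powers of $\e$ cancel exactly. The bound is a constant, not $\sqrt{2C(b-a)C_1}\,\exp(-A/2\e)$, and in particular does \emph{not} tend to zero as $\e\to0$. The fix is immediate and is precisely what the paper does: choose $C_1$ small enough (depending on $\delta$, $C$ and $b-a$) so that $\sqrt{2C(b-a)C_1}<\delta/2$; then the bootstrap closes for all $\e\in(0,\e_0)$ without any further shrinking of $\e_0$ for this step. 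With that correction your proof is complete.
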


\begin{proof}
Let $\varepsilon_0>0$ be sufficiently small such that for all $\varepsilon\in(0,\varepsilon_0)$, \eqref{eq:energy-ini} holds and 
\begin{equation}\label{1/2delta}
	\|u_0^\varepsilon-v\|_{{}_{L^1}}\leq\frac12\delta,
\end{equation}
where $\delta$ is the constant of Proposition \ref{prop:lower}. 
Let $\hat T>0$; we claim that if
\begin{equation}\label{claim1}
	\int_0^{\hat T}\|u_t^\varepsilon\|_{{}_{L^1}}dt\leq\frac12\delta,
\end{equation}
then there exists $C>0$ such that
\begin{equation}\label{claim2}
	E_\varepsilon[u^\varepsilon](\hat T)\geq Nc_\e-C\exp(-A/\varepsilon).
\end{equation}
Indeed, inequality \eqref{claim2} follows from Proposition \ref{prop:lower} if $\|u^\varepsilon(\cdot,\hat T)-v\|_{{}_{L^1}}\leq\delta$.
By using the triangle inequality, \eqref{1/2delta} and \eqref{claim1}, we obtain
\begin{equation*}
	\|u^\varepsilon(\cdot,\hat T)-v\|_{{}_{L^1}}\leq\|u^\varepsilon(\cdot,\hat T)-u_0^\varepsilon\|_{{}_{L^1}}+\|u_0^\varepsilon-v\|_{{}_{L^1}}
	\leq\int_0^{\hat T}\|u_t^\varepsilon\|_{{}_{L^1}}+\frac12\delta\leq\delta.
\end{equation*}
Upon integration of \eqref{eq:energy-dec}, we deduce
\begin{equation}\label{dissipative}
	E_\e[u^\e_0]-E_\e[u^\e](\hat T)=\e^{-1}\int_0^{\hat T}\|u_t^\e\|^2_{{}_{L^2}}\,dt.
\end{equation}
Substituting  \eqref{eq:energy-ini} and \eqref{claim2} in \eqref{dissipative} yields
\begin{equation}\label{L2-norm-Teps}
	\int_0^{\hat T}\|u_t^\varepsilon\|^2_{{}_{L^2}}dt\leq C_2\e\exp(-A/\varepsilon).
\end{equation}
It remains to prove that inequality \eqref{claim1} holds for $\hat T\geq C_1\e^{-1}\exp(A/\varepsilon)$.
If 
\begin{equation*}
	\int_0^{+\infty}\|u_t^\varepsilon\|_{{}_{L^1}}dt\leq\frac12\delta,
\end{equation*}
there is nothing to prove. 
Otherwise, choose $\hat T$ such that
\begin{equation*}
	\int_0^{\hat T}\|u_t^\varepsilon\|_{{}_{L^1}}dt=\frac12\delta.
\end{equation*}
Using H\"older's inequality and \eqref{L2-norm-Teps}, we infer
\begin{equation*}
	\frac12\delta\leq[\hat T(b-a)]^{1/2}\biggl(\int_0^{\hat T}\|u_t^\varepsilon\|^2_{{}_{L^2}}dt\biggr)^{1/2}\leq
	\left[\hat T(b-a)C_2\varepsilon\exp(-A/\varepsilon)\right]^{1/2}.
\end{equation*}
It follows that there exists $C_1>0$ such that
\begin{equation*}
	\hat T\geq C_1\varepsilon^{-1}\exp(A/\varepsilon),
\end{equation*}
and the proof is complete.
\end{proof}

Now, we have all the tools to prove \eqref{eq:limit}.
\begin{proof}[Proof of Theorem \ref{thm:main}]
The triangle inequality yields
\begin{equation}\label{trianglebar}
	\|u^\varepsilon(\cdot,t)-v\|_{{}_{L^1}}\leq\|u^\varepsilon(\cdot,t)-u_0^\varepsilon\|_{{}_{L^1}}+\|u_0^\varepsilon-v\|_{{}_{L^1}},
\end{equation}
for all $t\in[0,m\exp(A/\varepsilon)]$. 
The last term of inequality \eqref{trianglebar} tends to zero by assumption \eqref{eq:ass-u0}.
Regarding the first term, take $\varepsilon$ so small that $C_1\varepsilon^{-1}\geq m$;
thus we can apply Proposition \ref{prop:L2-norm} and by using H\"older's inequality and \eqref{L2-norm}, we infer
\begin{equation*}
	\sup_{0\leq t\leq m\exp(A/\varepsilon)}\|u^\e(\cdot,t)-u^\e_0\|_{{}_{L^1}}\leq\int_0^{m\exp(A/\varepsilon)}\|u_t^\e(\cdot,t)\|_{{}_{L^1}}\,dt\leq C\sqrt\e,
\end{equation*}		
for all $t\in[0,m\exp(A/\varepsilon)]$. Hence \eqref{eq:limit} follows.
\end{proof}

Theorem \ref{thm:main} provides sufficient conditions for the existence of a metastable state for equation \eqref{eq:Q-model}-\eqref{eq:curv+} 
and shows its persistence for (at least) an exponentially long time.
We conclude this section by constructing a family of functions with a transition layer structure, 
namely we show how to construct a function satisfying \eqref{eq:ass-u0}-\eqref{eq:energy-ini}.
To do this, we will use standing waves solutions to \eqref{eq:Q-model2}, i.e. we will use the solution $\Phi_\e=\Phi_\e(x)$ to the boundary value problem
\begin{equation}\label{eq:Fi}
	\e^2Q'(\e^2\Phi'_\e)\Phi''_\e-F'(\Phi_\e)=0, \qquad \qquad \lim_{x\to\pm\infty}\Phi_\e(x)=\pm1, \qquad \Phi_\e(0)=0,
\end{equation}
where $Q$ is defined in \eqref{eq:curv+}.
\begin{prop}\label{prop:ex-met}
Fix a piecewise function $v$ as in \eqref{vstruct} and assume that $F\in C^3(\R)$ satisfies \eqref{eq:ass-F}.
Then there exists a function $u^\e$ satisfying \eqref{eq:ass-u0} and \eqref{eq:energy-ini}.
\end{prop}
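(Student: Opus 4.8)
The plan is to build $u^\e$ by gluing together translated copies of the standing wave $\Phi_\e$ (solving \eqref{eq:Fi}) near each transition point $h_j$, and equating $u^\e$ to the constant $\pm1$ away from the transitions, with suitable corrections so that $u^\e\in H^1(a,b)$. First I would record the relevant properties of $\Phi_\e$: it is strictly monotone, connects $\pm1$ at $\pm\infty$, and — crucially — converges to its limits at an exponential rate. This last fact follows by linearizing \eqref{eq:Fi} at $\pm1$: since $Q'(0)=1$ and $F''(\pm1)\ge\lambda>0$, the wave behaves like $\Phi_\e(x)\mp1\sim \exp(-\sqrt{\lambda}\,|x|/\e)$ (up to constants and a slightly worse rate that can be taken arbitrarily close to $\sqrt\lambda$), so that $|\Phi_\e(\pm x)\mp1|+|\Phi_\e'(\pm x)|\le C\exp(-\gamma x/\e)$ for $x>0$ and any fixed $\gamma<\sqrt\lambda$. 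One also needs $E_\e$-type energy of a single full transition: a computation using the Euler--Lagrange equation and the first integral shows that $\int_\R\big[\tfrac{\sqrt{1+\e^4(\Phi_\e')^2}-1}{\e^3}+\tfrac{F(\Phi_\e)}\e\big]\,dx = c_\e$ exactly, by the same manipulation that produced \eqref{eq:c_eps}: along $\Phi_\e$ the integrand equals $|\Phi_\e'|\sqrt{F(\Phi_\e)[2-\e^2F(\Phi_\e)]}$, which integrates to $c_\e$.

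Next I would give the explicit construction. On each interval $(h_j-r,h_j+r)$ set $u^\e(x)=\pm\Phi_\e\big((x-h_j)/1\big)$ — more precisely $u^\e(x)=\sigma_j\Phi_\e(x-h_j)$ with $\sigma_j=\pm1$ chosen to match the orientation of the jump of $v$ at $h_j$ — and extend $u^\e$ to be locally constant (equal to $\pm1$) on the complementary intervals, except that near the endpoints $x=h_j\pm r$ one must interpolate between $\Phi_\e(\pm r)$ (which differs from $\pm1$ by $O(\exp(-\gamma r/\e))$) and the constant $\pm1$; a smooth monotone cutoff over an $\e$-independent length suffices, and by the quadratic bound \eqref{W-quadratic} on $F$ and the exponential smallness of $\Phi_\e(\pm r)\mp1$ and $\Phi_\e'(\pm r)$, the extra energy contributed on the gluing regions is $O(\exp(-2\gamma r/\e))$. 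Modifications near the domain endpoints $a,b$ (to enforce the Neumann condition, if desired) are handled the same way. This produces a function $u^\e\in H^1(a,b)$.

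Then I would verify the two defining properties. Property \eqref{eq:ass-u0}: since $\Phi_\e(x-h_j)\to \pm 1$ pointwise a.e.\ as $\e\to0$ on $(h_j-r,h_j+r)\setminus\{h_j\}$, dominated convergence gives $\|u^\e-v\|_{L^1}\to0$; in fact one gets $\|u^\e-v\|_{L^1}=O(\e)$ because the transition layer has width $O(\e)$. Property \eqref{eq:energy-ini}: add up the contributions. Each interval $(h_j-r,h_j+r)$ contributes $\int_{-r}^{r}\big[\cdots\big](\Phi_\e)\,dx$, which by the single-transition identity above equals $c_\e$ minus the tail $\int_{|x|>r}[\cdots]\,dx$; that tail is nonnegative and, using the exponential decay estimates together with \eqref{W-quadratic}, bounded by $C\exp(-2\gamma r/\e)$. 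The locally-constant pieces contribute zero, and the gluing regions contribute $O(\exp(-2\gamma r/\e))$. Hence $E_\e[u^\e]\le Nc_\e + C\exp(-2\gamma r/\e)$. Choosing $\gamma$ close enough to $\sqrt\lambda$ so that $A<2\gamma r$ — which is possible precisely because $A\in(0,r\sqrt{2\lambda})$ and $r\sqrt{2\lambda}<2r\sqrt\lambda$, so any $A$ in the allowed range is $<2\gamma r$ for $\gamma$ sufficiently close to $\sqrt\lambda$ — yields \eqref{eq:energy-ini}.

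\emph{The main obstacle} I anticipate is establishing the sharp exponential decay rate of the standing wave $\Phi_\e$ with a rate uniform in $\e$ in the right sense, i.e.\ showing $|\Phi_\e(x)\mp1|\le C\exp(-\gamma|x|/\e)$ with $\gamma$ arbitrarily close to $\sqrt\lambda=\sqrt{\min F''(\pm1)}$ and $C$ independent of $\e$. This requires a careful ODE argument: rescaling $y=x/\e$ transforms \eqref{eq:Fi} into $Q'(\e\,\phi_y)\phi_{yy}=F'(\phi)$ with $\phi(y)=\Phi_\e(\e y)$, and since $\e\phi_y\to0$ one has $Q'(\e\phi_y)\to1$, so $\phi$ is a small perturbation of the Allen--Cahn profile; near $\pm1$ one linearizes and uses a barrier/Gronwall argument, being careful that the $\e$-dependence in $Q'$ does not degrade the rate. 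Once this decay estimate is in hand, everything else is bookkeeping with the elementary inequalities already available in the paper. (The existence of $\Phi_\e$ itself, under the smallness condition \eqref{eq:maxF} on $F$, is the content of Proposition~\ref{prop:ex-met} and may be invoked.)
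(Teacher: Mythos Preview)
Your proposal is correct but takes a more laborious route than the paper. The paper's construction is simpler: it glues reflected and translated copies of $\Phi_\e$ directly at the midpoints $m_j=(h_{j-1}+h_j)/2$ between consecutive transition points, setting $u^\e(x)=\Phi_\e\big((-1)^j(x-h_j)\big)$ on $[m_j,m_{j+1}]$, with no constant pieces and no cutoffs. The key observation you miss is that, thanks to the first integral
\[
\e\,\Phi_\e'=\frac{\sqrt{F(\Phi_\e)\,[2-\e^2F(\Phi_\e)]}}{1-\e^2F(\Phi_\e)},
\]
the energy integrand along $\Phi_\e$ collapses \emph{exactly} to $\Phi_\e'\sqrt{F(\Phi_\e)[2-\e^2F(\Phi_\e)]}$, so that the energy of each piece equals $\int_{\Phi_\e(m_j)}^{\Phi_\e(m_{j+1})}\sqrt{F(s)[2-\e^2F(s)]}\,ds<c_\e$ (strict, since the endpoints lie in $(-1,1)$). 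Summing over $j$ gives $E_\e[u^\e]<Nc_\e$ outright, so \eqref{eq:energy-ini} holds with no exponential correction whatsoever, and no decay estimate on $\Phi_\e$ is needed anywhere. Your gluing-to-constants step manufactures an exponentially small energy surplus that you then have to kill via the sharp tail bounds on $\Phi_\e$---correct, but entirely avoidable.

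Two minor remarks. First, you cannot ``invoke'' the existence of $\Phi_\e$ from Proposition~\ref{prop:ex-met}, since that is precisely the proposition you are proving; the existence is an ODE fact (under \eqref{eq:maxF}) established within the proof via the first integral above. Second, there is no need to enforce Neumann conditions on $u^\e$: Definition~\ref{def:TLS} only asks for $u^\e\in H^1(a,b)$ together with \eqref{eq:ass-u0}--\eqref{eq:energy-ini}.
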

\begin{proof}
First of all, let us prove that if $Q$ is as in \eqref{eq:curv+} and $F\in C^3(\R)$ satisfies \eqref{eq:ass-F} and 
\begin{equation}\label{eq:maxF}
	\max_{\Phi\in[-1,1]}F(\Phi)<\e^{-2},
\end{equation}
then, there exists a unique solution to \eqref{eq:Fi}.
Multiplying by $\Phi'_\e=\Phi'_\e(x)$ equation \eqref{eq:Fi}, we deduce
\begin{equation*}
	\e^2Q'(\e^2\Phi'_\e)\Phi'_\e\Phi''_\e-F'(\Phi_\e)\Phi'_\e=0, \qquad\qquad \mbox{in }\, (-\infty,+\infty),
\end{equation*}
and, therefore,
\begin{equation*}
	P_\e(\Phi'_\e)=F(\Phi_\e), \qquad\qquad \mbox{in }\, (-\infty,+\infty),
\end{equation*}
where
\begin{equation*}
	P_\e(s):=\int_{0}^{s}\e^2u\,Q'(\e^2u)\,du=\frac{1}{\e^2}\left(1-\frac{1}{\sqrt{1+\e^4s^2}}\right).
\end{equation*}
It follows that the profile $\Phi_\e$ satisfies 
\begin{equation}\label{eq:FI-first}
	\begin{cases}
		\e\Phi'_\e(x)=\displaystyle\frac{\sqrt{F(\Phi_\e)[2-\e^2F(\Phi_\e)]}}{1-\e^2F(\Phi_\e)},\\
		\Phi_\e(0)=0.
	\end{cases}
\end{equation}
Hence, the assumptions on the potential $F$ \eqref{eq:ass-F} imply that there exists a unique solution to \eqref{eq:FI-first} which is increasing and implicitly defined by
\begin{equation*}
	\int_{0}^{\Phi_\e(x)}\displaystyle\frac{1-\e^2F(s)}{\sqrt{F(s)[2-\e^2F(s)]}}\,ds=\frac{x}{\e}.
\end{equation*} 
Observe that 
\begin{equation*}
	\lim_{\e\to0}\Phi_\e(x)=
		\begin{cases}
			-1, \qquad & x<0,\\	
			0, &x=0,\\
			+1, & x>0.
		\end{cases}
\end{equation*}
Now, choose $\e_0>0$ small enough so that the condition \eqref{eq:maxF} holds for any $\e\in(0,\e_0)$.
We use the profile $\Phi_\e$ to construct a family of functions with a transition layer structure.
Fix $N\in\mathbb{N}$ and $N$ transition points $a<h_1<h_2<\dots<h_n<b$, and denote the middle points by
\begin{equation*}
	m_1:=a, \qquad \quad m_j:=\frac{h_{j-1}+h_j}{2}, \quad j=2,\dots,N-1, \qquad \quad m_N:=b.
\end{equation*}
Define
\begin{equation}\label{eq:translayer}
	u^\e(x):=\Phi_\e\left((-1)^j(x-h_j)\right), \qquad \qquad x\in[m_j,m_{j+1}], \qquad \qquad j=1,\dots N,
\end{equation}
where $\Phi_\e$ is the solution to \eqref{eq:Fi}.
Notice that $u^\e(h_j)=0$, for $j=1,\dots,N$ and for definiteness we choose $u^\e(a)<0$ (the case $u^\e(a)>0$ is analogous).
Let us prove that $u^\e$ has an $N$-transition layer structure, i.e. that it satisfies \eqref{eq:ass-u0}-\eqref{eq:energy-ini}.
It is easy to check that $u^\e\in H^1(a,b)$ and satisfies \eqref{eq:ass-u0}; let us show that
\begin{equation}\label{eq:Nc_eps}
	E_\e[u^\e]\leq Nc_\e,
\end{equation}
where $E_\e$ and $c_\e$ are defined in \eqref{eq:energy+} and \eqref{eq:c_eps}, respectively.
From the definitions of $E_\e$ and $u^\e$ we obtain
\begin{equation*}
	E_\e[u^\e]=\sum_{j=1}^{N}\int_{m_j}^{m_{j+1}}\left[\frac{\sqrt{1+\e^4\left(\Phi'_\e\right)^2}-1}{\e^3}+\frac{F(\Phi_\e)}\e\right]\,dx.
\end{equation*}
From \eqref{eq:FI-first}, it follows that
\begin{align*}
	\int_{m_j}^{m_{j+1}}\!\!\left[\frac{\sqrt{1+\e^4\left(\Phi'_\e\right)^2}-1}{\e^3}+\frac{F(\Phi_\e)}\e\right]dx&=
	\int_{m_j}^{m_{j+1}}\!\!\left[ \frac1{\e^3-\e^5F(\Phi_\e)}-\frac{1}{\e^3}+\frac{F(\Phi_\e)}\e\right]dx\\
	&=\int_{m_j}^{m_{j+1}} \frac{F(\Phi_\e)[2-\e^2F(\Phi_\e)]}{\e[1-\e^2F(\Phi_\e)]}\,dx\\
	&=\int_{m_j}^{m_{j+1}}\Phi'_\e\sqrt{F(\Phi_\e)[2-\e^2F(\Phi_\e)]}\,dx\\
	&=\int_{\Phi_\e(m_j)}^{\Phi_\e(m_{j+1})}\sqrt{F(s)[2-\e^2F(s)]}\,ds<c_\e,
\end{align*}
where $c_\e$ is defined in \eqref{eq:c_eps}.
Summing up all the terms we end up with \eqref{eq:Nc_eps} and the proof is complete.
\end{proof}
\begin{rem}\label{rem:standing}
Let us stress out that the assumption $F''(\pm1)>0$ implies the exponential decay
\begin{equation}\label{eq:exp-decay}
	\begin{aligned}
		&1-\Phi_\e(x)\leq c_1e^{-c_2x/\e},  \qquad &\mbox{as } x\to+\infty,\\ 
		&\Phi_\e(x)+1\leq c_1e^{c_2x/\e}, &\mbox{as } x\to-\infty,
	\end{aligned}
\end{equation}
for some constants $c_1,c_2>0$ (depending on $F$).
On the other hand, if $F$ is a double well potential with wells at $\pm1$ and $F''(\pm1)=0$, then we have existence of a unique solution to \eqref{eq:Fi} 
but we do not have the exponential decay \eqref{eq:exp-decay}.
As we will see in Section \ref{sec:num}, there are considerable differences on the metastable dynamics of the solutions to \eqref{eq:Q-model} 
between the case $F''(\pm1)>0$ and the \emph{degenerate} case $F''(\pm1)=0$;
indeed, the assumption $F''(\pm1)>0$ is necessary to have persistence of metastable states for an exponentially long time.
\end{rem}

\section{Slow motion in the case of an unbounded diffusion}\label{sec:met-}
In this section we apply the same strategy of the previous one in order to prove existence 
and persistence for an exponentially long time of metastable states for the IBVP \eqref{eq:Q-model}-\eqref{eq:Neu}-\eqref{eq:initial}, 
when $Q$ is the mean curvature operator in Lorentz--Minkowski space \eqref{eq:curv-}. 
Most of the results of Section \ref{sec:met+} hold also when considering $Q$ as in \eqref{eq:curv-} and the energy \eqref{eq:energy-}.
For instance, by reasoning as in the proof of \eqref{eq:energy-dec}, one can prove that if $u\in C([0,T],H^2(a,b))$ is a solution of \eqref{eq:Q-model}-\eqref{eq:Neu}, 
with $Q$ as in \eqref{eq:curv-}, and $\mathcal{E}_\e$ is the energy defined in \eqref{eq:energy-}, then
\begin{equation}\label{eq:energy-var-}
	\mathcal{E}_\e[u](0)-\mathcal{E}_\e[u](T)=\e^{-1}\int_0^T\|u_t(\cdot,t)\|^2_{{}_{L^2}}dt.
\end{equation}
In particular, by using the inequality
\begin{equation*}
	\frac12\e x^2\leq\frac{1-\sqrt{1-\e^4x^2}}{\e^3}, \qquad \qquad x\in(-\e^{-2},\e^{-2}),
\end{equation*}
and the positiveness of $F$, we obtain the following \emph{a-priori} estimate 
\begin{equation*}
	\|u_x(\cdot,t)\|^2_{{}_{L^2}}\leq C\e^{-1}, \qquad\qquad \forall\, t\in[0,T],
\end{equation*}
for some positive constant $C$ independent on $\e$.

In order to prove the exponentially slow motion of the solutions we make use of the following inequality (which plays the same role of \eqref{eq:strangeineq+} in the case of saturating diffusion).
\begin{lem}
	Set $\e,L>0$. 
	Then
	\begin{equation}\label{eq:strangeineq-}
	\frac{1-\sqrt{1-\e^4x^2}}{\e^3}+\frac{y}{\e}\geq |x|\sqrt{2y+\e^2y^2},
	\end{equation}
	for any $(x,y)\in[-\e^{-2},\e^{-2}]\times[0,L]$.
\end{lem}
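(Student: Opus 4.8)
The plan is to follow closely the argument used for \eqref{eq:strangeineq+}. Since both sides of \eqref{eq:strangeineq-} are even in $x$, it suffices to prove the inequality for $x\geq0$; after multiplying by $\e^{3}>0$, the claim becomes $f(x,y)\geq0$ on the rectangle $D:=[0,\e^{-2}]\times[0,L]$, where
\begin{equation*}
	f(x,y):=1-\sqrt{1-\e^{4}x^{2}}+\e^{2}y-\e^{3}x\sqrt{2y+\e^{2}y^{2}}.
\end{equation*}
Since $D$ is compact and $f$ is continuous on $D$, the minimum of $f$ over $D$ is attained; I would show that it equals $0$ by checking that $f$ vanishes at every interior critical point and is nonnegative on $\partial D$.

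In the open rectangle $\mathring{D}$ the function $f$ is smooth, with
\begin{align*}
	f_{x}(x,y)&=\frac{\e^{4}x}{\sqrt{1-\e^{4}x^{2}}}-\e^{3}\sqrt{2y+\e^{2}y^{2}},\\
	f_{y}(x,y)&=\e^{2}-\e^{3}x\,\frac{1+\e^{2}y}{\sqrt{2y+\e^{2}y^{2}}}.
\end{align*}
Hence $\nabla f(x,y)=0$ forces $x=g(y):=\dfrac{\sqrt{2y+\e^{2}y^{2}}}{\e(1+\e^{2}y)}$, and --- in contrast with the saturating case, where $g$ had a pole --- here $g$ is smooth on all of $(0,L)$ and satisfies $\e^{4}g(y)^{2}=1-(1+\e^{2}y)^{-2}<1$, so that $g(y)\in(0,\e^{-2})$. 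Substituting $x=g(y)$ yields $\sqrt{1-\e^{4}g(y)^{2}}=(1+\e^{2}y)^{-1}$, and then a short computation gives $f(g(y),y)=0$; thus $f$ vanishes at all of its interior critical points.

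It remains to bound $f$ on $\partial D$. On the edges $y=0$ and $x=0$ one has $f(x,0)=1-\sqrt{1-\e^{4}x^{2}}\geq0$ and $f(0,y)=\e^{2}y\geq0$; along $x=\e^{-2}$,
\begin{equation*}
	f(\e^{-2},y)=1+\e^{2}y-\e\sqrt{2y+\e^{2}y^{2}}\geq0,
\end{equation*}
because $(1+\e^{2}y)^{2}-\e^{2}(2y+\e^{2}y^{2})=1>0$; and along $y=L$ the map $x\mapsto f(x,L)$ has increasing $x$-derivative running from a negative value at $x=0$ to $+\infty$ as $x\to\e^{-2}$, so it attains its minimum at the unique $x^{*}$ with $f_{x}(x^{*},L)=0$, which one identifies as $x^{*}=g(L)$, whence $f(x^{*},L)=0$. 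Therefore $\min_{D}f=0$, which proves \eqref{eq:strangeineq-}.

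The only slightly delicate point --- and thus the natural place where care is needed --- is that $f$ fails to be differentiable on the edges $y=0$ and $x=\e^{-2}$ (the square roots there have infinite slope), which is exactly why these edges must be handled as part of the boundary rather than via $\nabla f=0$. Conceptually, \eqref{eq:strangeineq-} is nothing but the Fenchel--Young inequality $h(x)+h^{*}(p)\geq|x|\,p$ for the even, strictly convex function $h(s)=\bigl(1-\sqrt{1-\e^{4}s^{2}}\bigr)/\e^{3}$ on $\{|s|\leq\e^{-2}\}$, whose Legendre transform is $h^{*}(p)=\bigl(\sqrt{1+\e^{2}p^{2}}-1\bigr)/\e^{3}$: the choice $p=\sqrt{2y+\e^{2}y^{2}}$ makes $1+\e^{2}p^{2}=(1+\e^{2}y)^{2}$, so $h^{*}(p)=y/\e$ exactly, which also explains the equality along $x=g(y)$. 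I would keep the elementary argument above as the proof and, if useful, record this interpretation in a remark.
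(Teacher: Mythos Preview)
Your proof is correct and follows essentially the same route as the paper's: reduce to $x\geq0$, study the sign of $f(x,y)=1-\sqrt{1-\e^{4}x^{2}}+\e^{2}y-\e^{3}x\sqrt{2y+\e^{2}y^{2}}$ on the rectangle, locate the interior critical points on the curve $x=g(y)$ where $f$ vanishes, and verify nonnegativity on the four boundary edges (including the convexity argument on $y=L$). The Fenchel--Young interpretation you append is a genuine addition not present in the paper and gives a conceptual explanation for both the inequality and the equality curve; it would fit well as a remark.
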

\begin{proof}
Similarly as the proof of \eqref{eq:strangeineq+}, we study the sign of the function
\begin{equation*}
	f(x,y):=1-\sqrt{1-\e^4x^2}+\e^2y-\e^3x\sqrt{2y+\e^2y^2},
\end{equation*}
in the domain $K:=[0,\e^{-2}]\times[0,L]$.
For any $(x,y)\in\mathring{K}$, we have
\begin{align*}
	& f_x(x,y)=\frac{\e^4x}{\sqrt{1-\e^4x^2}}-\e^3\sqrt{2y+\e^2y^2}, \\
	& f_y(x,y)=\e^2\left(1-\e x\frac{1+\e^2y}{\sqrt{2y+\e^2y^2}}\right),
\end{align*}
and $\nabla f(x,y)=0$ if and only if 
\begin{equation*}
	x=\frac{\sqrt{2y+\e^2y^2}}{\e(1+\e^2y)}=:\bar g(y), \qquad \qquad y\in(0,L).
\end{equation*}
Notice that $f(\bar g(y),y)=0$ for any $y\in[0,L]$, and so $f=0$ in all the critical points.
Regarding the boundary of the domain $K$, we have
\begin{equation*}
	f(0,y)\geq0, \quad \forall\,y\in[0,L], \qquad \qquad 
	f(x,0)\geq0, \quad \forall\,x\in[0,\e^{-2}].
\end{equation*}
Moreover, one has
\begin{equation*}
	f(\e^{-2},y)=1+\e^2y-\e\sqrt{2y+\e^2y^2}>0, \qquad \qquad \forall\,y\in[0,L],
\end{equation*}
Finally, it is easy to check that $f'(x,L)\leq0$ in $[0,\bar g(L)]$, $f'(x,L)\geq0$ in $[\bar g(L),\e^{-2})$ and $f(\bar g(L),L)=0$.
Therefore $f$ is non negative in $K$ and the inequality \eqref{eq:strangeineq-} follows.
\end{proof}
Similarly as \eqref{eq:c_eps}, the inequality \eqref{eq:strangeineq-} allows us to state that if $\bar{u}$ is a monotone function
connecting the two stable points $+1$ and $-1$, then for the energy defined in \eqref{eq:energy-} one has
\begin{equation}\label{eq:gamma_eps}
	\mathcal{E}_\e[\bar{u}]\geq\int_a^b|\bar{u}'|\sqrt{F(\bar{u})\left[2+\e^2F(\bar{u})\right]}\,dx=\int_{-1}^{+1}\sqrt{F(s)\left[2+\e^2F(s)\right]}\,ds=:\gamma_\e.
\end{equation}
Therefore, in this case the minimum energy to have a transition is given by $\gamma_\e$ and as before
\begin{equation*}
	\lim_{\e\to0} \gamma_\e=\int_{-1}^{+1}\sqrt{2F(s)}\,ds=:c_0.
\end{equation*}
As in Section \ref{sec:met+}, we say that a function $u^\e$ has an $N$-transition layer structure if \eqref{eq:ass-u0} holds with $v$ defined in \eqref{eq:translayer} and
and there exist $C>0$ and $A\in(0,r\sqrt{2\lambda})$, $\lambda=\min\{F''(\pm1)\}$ (independent on $\e$) such that
\begin{equation}\label{eq:energy-ini-}
	\mathcal{E}_\varepsilon[u^\varepsilon]\leq N\gamma_\e+C\exp(-A/\e),
\end{equation}
for any $\varepsilon\ll1$, where the energy $\mathcal{E}_\e$ and the positive constant $\gamma_ \e$ are defined in \eqref{eq:energy-} and \eqref{eq:gamma_eps}, respectively.

An example of function satisfying \eqref{eq:ass-u0}-\eqref{eq:energy-ini-} is given by \eqref{eq:translayer}, 
where we substitute $\Phi_\e$ with the solution $\Psi_\e:=\Psi_\e(x)$ of the boundary value problem
\begin{equation}\label{eq:Psi}
	\e^2Q'(\e^2\Psi'_\e)\Psi''_\e-F'(\Psi_\e)=0, \qquad \qquad \lim_{x\to\pm\infty}\Psi_\e(x)=\pm1, \qquad \Psi_\e(0)=0,
\end{equation}
where now $Q$ is defined in \eqref{eq:curv-}.
\begin{lem}
	Let $Q$ be as in \eqref{eq:curv-} and assume that $F\in C^3(\R)$ satisfies \eqref{eq:ass-F}. 
	Then, there exists a unique solution to \eqref{eq:Psi}.
\end{lem}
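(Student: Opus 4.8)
The plan is to mimic exactly the structure of the argument for the profile $\Phi_\e$ associated with the Euclidean mean curvature operator, exploiting the first-integral (energy) reduction that works whenever the flux $Q$ is odd and increasing. First I would multiply the ODE in \eqref{eq:Psi} by $\Psi'_\e$ and integrate, observing that $Q'(\e^2\Psi'_\e)\Psi'_\e\Psi''_\e = \e^{-2}\frac{d}{dx}P_\e(\Psi'_\e)$, where now $P_\e(s):=\int_0^s \e^2 u\, Q'(\e^2 u)\,du$ with $Q$ as in \eqref{eq:curv-}. A direct computation gives
\begin{equation*}
	P_\e(s)=\frac{1}{\e^2}\left(\frac{1}{\sqrt{1-\e^4 s^2}}-1\right),
\end{equation*}
so that the conservation law reads $P_\e(\Psi'_\e)=F(\Psi_\e)$ (the constant being fixed by the boundary behaviour $\Psi'_\e\to 0$, $\Psi_\e\to\pm1$, together with $F(\pm1)=0$). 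Solving for $\Psi'_\e$ — and using that $F\geq0$ so $P_\e^{-1}$ is well defined on $[0,+\infty)$ without any smallness restriction, in contrast with the Euclidean case — yields
\begin{equation*}
	\e\Psi'_\e(x)=\frac{\sqrt{F(\Psi_\e)[2+\e^2F(\Psi_\e)]}}{1+\e^2F(\Psi_\e)},
\end{equation*}
which is the analogue of \eqref{eq:FI-first}.

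Next I would argue that this scalar first-order problem, with the initial condition $\Psi_\e(0)=0$, has a unique solution. The right-hand side is a nonnegative function of $\Psi_\e$ which vanishes precisely at $\Psi_\e=\pm1$; away from these points it is locally Lipschitz in $\Psi_\e$ because $F\in C^3$ and the denominator $1+\e^2F$ stays bounded away from zero (here the Lorentz--Minkowski case is actually cleaner than the Euclidean one, where $1-\e^2F$ could vanish). Hence standard ODE theory gives a unique local solution through $(0,0)$ with $\Psi_\e$ strictly increasing, and separating variables yields the implicit representation
\begin{equation*}
	\int_0^{\Psi_\e(x)}\frac{1+\e^2 F(s)}{\sqrt{F(s)[2+\e^2F(s)]}}\,ds=\frac{x}{\e}.
\end{equation*}
The assumption $F''(\pm1)>0$ makes the integrand behave like $|s\mp1|^{-1}$ near $s=\pm1$, so the integral diverges as $\Psi_\e\to\pm1$; this shows the solution is global on $\R$ and that $\Psi_\e(x)\to\pm1$ as $x\to\pm\infty$, giving existence. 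Uniqueness of the full boundary value problem then follows because any solution of \eqref{eq:Psi} must satisfy the same first integral and hence the same monotone ODE with the same normalization $\Psi_\e(0)=0$.

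The only genuinely delicate point is the same one as in the Euclidean proof: justifying that a solution of the \emph{first-order} reduced equation is actually a solution of the \emph{second-order} problem \eqref{eq:Psi}, and conversely. The forward direction requires differentiating the relation $P_\e(\Psi'_\e)=F(\Psi_\e)$, which is legitimate since $\Psi'_\e>0$ in the interior (so $P_\e'$ does not vanish) and $\Psi_\e$ is $C^1$; one recovers $\e^2 Q'(\e^2\Psi'_\e)\Psi''_\e=F'(\Psi_\e)$ after dividing by $\Psi'_\e$, with the division valid wherever $\Psi'_\e\neq0$, i.e.\ everywhere by monotonicity. I expect no further obstacle: the constant $\gamma_\e$ in \eqref{eq:gamma_eps} is automatically the energy of this profile, by the same telescoping computation used to obtain \eqref{eq:Nc_eps} in Proposition~\ref{prop:ex-met}, so the lemma reduces to the clean ODE analysis sketched above, and the absence of any condition analogous to \eqref{eq:maxF} is the notational payoff of working with the unbounded flux.
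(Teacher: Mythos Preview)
Your proposal is correct and follows essentially the same approach as the paper: multiply by $\Psi'_\e$, obtain the first integral $P_\e(\Psi'_\e)=F(\Psi_\e)$ (equivalently $\frac{1}{\sqrt{1-\e^4(\Psi'_\e)^2}}=1+\e^2F(\Psi_\e)$), solve for $\Psi'_\e$, and write the implicit integral representation. You supply more justification than the paper does (local Lipschitz regularity, divergence of the integral near $\pm1$ to get global existence, and the equivalence of the first- and second-order formulations), but the argument is the same.
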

\begin{proof}
By reasoning as in the proof of Proposition \ref{prop:ex-met} and multiplying by $\Psi'_\e=\Psi'_\e(x)$ equation \eqref{eq:Psi}, we deduce
\begin{equation*}
\frac{1}{\sqrt{1-\e^4(\Psi'_\e)^2}}=1+\e^2F(\Psi_\e), \qquad\qquad \mbox{in }\, (-\infty,+\infty).
\end{equation*}
Consequently, the profile $\Phi_\e$ satisfies 
\begin{equation*}
	\begin{cases}
	\e\Psi'_\e(x)=\displaystyle\frac{\sqrt{F(\Psi_\e)[2+\e^2F(\Psi_\e)]}}{1+\e^2F(\Psi_\e)},\\
	\Psi_\e(0)=0.
	\end{cases}
\end{equation*}
Since the potential $F$ satisfies \eqref{eq:ass-F}, we have a unique solution to the boundary value problem \eqref{eq:Psi}, 
which is strictly increasing and implicitly defined by
\begin{equation*}
	\int_{0}^{\Psi_\e(x)}\displaystyle\frac{1+\e^2F(s)}{\sqrt{F(s)[2+\e^2F(s)]}}\,ds=\frac{x}{\e},
\end{equation*} 
and the proof is complete.
\end{proof}
As in Remark \ref{rem:standing}, we emphasize the fact that the assumption $F''(\pm1)>0$ implies an exponential decay like \eqref{eq:exp-decay}.
Moreover, we have
\begin{equation*}
	\lim_{\e\to0}\Psi_\e(x)=
		\begin{cases}
			-1, \qquad & x<0,\\	
			0, &x=0,\\
			+1, & x>0,
		\end{cases}
\end{equation*}
and for any $\e_0>0$ there exists $C>0$ (independent on $\e$) such that
\begin{equation*}
	\Psi'_\e(x)\leq C\e^{-1}, \qquad \qquad\forall\, x\in\R,
\end{equation*}
for any $\e\in(0,\e_0)$.

In particular, one can observe (similarly to \eqref{eq:Nc_eps}) that  the function $U^\e$, defined by
\begin{equation*}
	U^\e(x):=\Psi_\e\left((-1)^j(x-h_j)\right), \qquad \qquad x\in[m_j,m_{j+1}], \qquad \qquad j=1,\dots N,
\end{equation*}
satisfies
\begin{equation}\label{eq:Ngamma_eps}
	\mathcal{E}_\e[U^\e]\leq N\gamma_\e.
\end{equation}

Now, we can proceed in the same way as in the analysis of Section \ref{sec:met+} to prove the persistence for an exponentially long time of the $N$-transition layer structure.
\begin{thm}[metastable dynamics with unbounded diffusion]\label{thm:main-}
Assume that $F\in C^3(\R)$ satisfies \eqref{eq:ass-F} and define $\lambda:=\min\{F''(\pm1)\}$. 
Let $v$ be as in \eqref{vstruct} and let $A\in(0,r\sqrt{2\lambda})$. 
If $u^\varepsilon$ is the solution of \eqref{eq:Q-model}-\eqref{eq:Neu}-\eqref{eq:initial}
with $Q$ as in \eqref{eq:curv-} and initial datum $u_0^{\varepsilon}$ satisfying \eqref{eq:ass-u0} and \eqref{eq:energy-ini-}, then, 
\begin{equation}\label{eq:main}
	\sup_{0\leq t\leq m\exp(A/\varepsilon)}\|u^\varepsilon(\cdot,t)-v\|_{{}_{L^1}}\xrightarrow[\varepsilon\rightarrow0]{}0,
\end{equation}
for any $m>0$.
\end{thm}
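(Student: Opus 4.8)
The plan is to follow, line by line, the three-step scheme already carried out for Theorem \ref{thm:main}, making only the changes forced by using the energy $\mathcal{E}_\e$ of \eqref{eq:energy-}, the inequality \eqref{eq:strangeineq-} in place of \eqref{eq:strangeineq+}, the constant $\gamma_\e$ of \eqref{eq:gamma_eps} in place of $c_\e$, and the dissipation identity \eqref{eq:energy-var-} in place of \eqref{eq:energy-dec}. Concretely: (i) prove the variational lower bound of Proposition \ref{prop:lower-} for functions $L^1$-close to $v$; (ii) deduce, via \eqref{eq:energy-var-} and the assumption \eqref{eq:energy-ini-}, the analog of Proposition \ref{prop:L2-norm}, namely $\int_0^{C_1\e^{-1}\exp(A/\e)}\|u_t^\e\|_{{}_{L^2}}^2\,dt\le C_2\e\exp(-A/\e)$; (iii) conclude \eqref{eq:main} by the triangle-inequality argument of the proof of Theorem \ref{thm:main}.

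The only genuinely new work is in step (i). The statement of Proposition \ref{prop:lower-} is: there exist $\e_0,C,\delta>0$ depending only on $F,v,A$ such that every $u\in H^1(a,b)$ with $\mathcal{E}_\e[u]<\infty$ (equivalently $|u_x|\le\e^{-2}$ a.e., so that \eqref{eq:strangeineq-} applies) and $\|u-v\|_{{}_{L^1}}\le\delta$ satisfies $\mathcal{E}_\varepsilon[u]\ge N\gamma_\e-C\exp(-A/\varepsilon)$ for $\e\in(0,\e_0)$. The proof is verbatim that of Proposition \ref{prop:lower} after replacing $\sqrt{F(s)[2-\e^2F(s)]}$ by $\sqrt{F(s)[2+\e^2F(s)]}$ everywhere and using \eqref{eq:strangeineq-} to get, for $a\le c<d\le b$,
\[
\int_c^d\left[\frac{1-\sqrt{1-\e^4u_x^2}}{\e^3}+\frac{F(u)}{\e}\right]dx\ge\left|\int_{u(c)}^{u(d)}\sqrt{F(s)[2+\e^2F(s)]}\,ds\right|.
\]
One chooses $\hat r\in(0,r)$ and $\rho_1,\rho_2$ as in \eqref{eq:nu}--\eqref{eq:forrho2} (with $2+\e^2F$ in the integrands), locates points $h_i\pm r_\mp\in(0,\hat r)$ where $|u\mp1|<\rho_2$ by the same contradiction argument, and splits the integral over $(h_i-r,h_i+r)$ exactly as in \eqref{eq:Pe}. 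The one point requiring care is the Euler--Lagrange analysis of the constrained minimizer $z$ on $[h_i+r_+,h_i+r]$: here the equation is $z''=\e^{-2}F'(z)\bigl(1-\e^4(z')^2\bigr)^{3/2}$, and the saturation factor $(1-\e^4(z')^2)^{3/2}$ is bounded \emph{above}, not below, by $1$. To compensate, multiply the Euler--Lagrange equation by $z'$ and use $z'(h_i+r)=0$ to obtain the first integral $\bigl(1-\e^4(z')^2\bigr)^{-1/2}=1+\e^2\bigl(F(z)-F(z(h_i+r))\bigr)$; since $z$ ranges in $(1-\rho_1,1+\rho_1)$ this yields $(1-\e^4(z')^2)^{3/2}\ge(1+\e^2\max_{[-1,1]}F)^{-3}=:\kappa_\e\to1$ as $\e\to0$ (and in particular $|z'|<\e^{-2}$, so $z$ is admissible). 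Then, with $\psi:=(z-1)^2$, the Taylor expansion of $F'$ gives $\psi''\ge2\kappa_\e\e^{-2}(z-1)F'(z)\ge\kappa_\e(2\lambda-2\nu\rho_1)\e^{-2}\psi\ge(\mu^2/\e^2)\psi$ with $\mu=A/(r-\hat r)$, once $\hat r,\rho_1$ and then $\e_0$ are fixed so that $\kappa_\e(2\lambda-2\nu\rho_1)\ge\mu^2$; this is possible because $A<r\sqrt{2\lambda}$ and $\kappa_\e\to1$. From here the comparison with the explicit $\cosh$ solution gives an exponentially small bound on $|z(h_i+r)-1|$ as in \eqref{|z-v+|<exp}, the quadratic bound on $F$ near $s=1$ gives \eqref{fi<exp}, and the estimates $I_1\ge\beta_\e-\tfrac{C}{2N}\exp(-A/\e)$, $I_2\ge\alpha_\e-\tfrac{C}{2N}\exp(-A/\e)$ follow; summing over $i=1,\dots,N$ yields Proposition \ref{prop:lower-}.

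Steps (ii) and (iii) are pure transcription. For (ii): choosing $\e_0$ so small that $\|u_0^\e-v\|_{{}_{L^1}}\le\tfrac12\delta$ and \eqref{eq:energy-ini-} hold, one shows by the triangle inequality that $\int_0^{\hat T}\|u_t^\e\|_{{}_{L^1}}\,dt\le\tfrac12\delta$ forces $\|u^\e(\cdot,\hat T)-v\|_{{}_{L^1}}\le\delta$, hence $\mathcal{E}_\e[u^\e](\hat T)\ge N\gamma_\e-C\exp(-A/\e)$ by Proposition \ref{prop:lower-}; subtracting this from \eqref{eq:energy-ini-} in the integrated identity \eqref{eq:energy-var-} gives $\int_0^{\hat T}\|u_t^\e\|_{{}_{L^2}}^2\,dt\le C_2\e\exp(-A/\e)$, and a H\"older argument shows the running constraint persists up to $\hat T=C_1\e^{-1}\exp(A/\e)$. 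For (iii): $\|u^\varepsilon(\cdot,t)-v\|_{{}_{L^1}}\le\int_0^{m\exp(A/\e)}\|u_t^\e\|_{{}_{L^1}}\,dt+\|u_0^\e-v\|_{{}_{L^1}}$; the last term vanishes by \eqref{eq:ass-u0}, and for $\e$ so small that $C_1\e^{-1}\ge m$ the first term is $\le C\sqrt\e$ by H\"older and the $L^2$ estimate, so \eqref{eq:main} follows. I expect the main obstacle to be confined to Proposition \ref{prop:lower-}: namely verifying that the constrained minimizer remains in the admissible class $|z'|<\e^{-2}$ and that the degenerating factor $(1-\e^4(z')^2)^{3/2}$ stays bounded below on the relevant interval — both handled by the first-integral identity above, which pins $z$ near the well $s=1$ where $F$ is small. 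Everything else is the Section \ref{sec:met+} argument with $2-\e^2F\mapsto2+\e^2F$.
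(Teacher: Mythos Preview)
Your proposal is correct and follows exactly the scheme the paper itself uses: the paper states that Proposition~\ref{prop:lower-} ``can be proved with the same technique used to prove Proposition~\ref{prop:lower}'' and that the proof of Theorem~\ref{thm:main-} ``is formally identical'' to that of Theorem~\ref{thm:main}, omitting all details. Your write-up in fact goes beyond what the paper spells out: you correctly isolate the one spot where the transcription is not mechanical---the factor $(1-\e^4(z')^2)^{3/2}\le1$ in the Euler--Lagrange equation for the constrained minimizer now points the wrong way for the differential inequality on $\psi=(z-1)^2$---and your first-integral fix, yielding $(1-\e^4(z')^2)^{-1/2}=1+\e^2\bigl(F(z)-F(z(h_i+r))\bigr)$ and hence a lower bound $\kappa_\e\to1$ on that factor, is a clean resolution that the paper does not make explicit.
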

The proof of Theorem \ref{thm:main-} depends upon the following lower bound on the energy functional $\mathcal{E}_\e$. 
\begin{prop}\label{prop:lower-}
Assume that $F\in C^3(\R)$ satisfies \eqref{eq:ass-F}. 
Let $v$ be as in \eqref{vstruct}, $\lambda:=\min\{F''(\pm1)\}>0$ and $A\in(0,r\sqrt{2\lambda})$.
Then, there exist $\e_0,C,\delta>0$ (depending only on $F,v$ and $A$) such that if $u\in H^1(a,b)$ satisfies 
\begin{equation*}
	\|u-v\|_{{}_{L^1}}\leq\delta,
\end{equation*}
then for any $\e\in(0,\e_0)$,
\begin{equation*}
	\mathcal{E}_\varepsilon[u]\geq N\gamma_\e-C\exp(-A/\varepsilon),
\end{equation*}
where $\mathcal{E}_\e$ and $\gamma_\e$ are defined in \eqref{eq:energy-} and \eqref{eq:gamma_eps}, respectively.
\end{prop}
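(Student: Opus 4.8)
The plan is to follow the proof of Proposition~\ref{prop:lower} essentially verbatim, replacing the inequality \eqref{eq:strangeineq+} by its Lorentz--Minkowski counterpart \eqref{eq:strangeineq-}, the integrand $\sqrt{F(s)[2-\e^2F(s)]}$ by $\sqrt{F(s)[2+\e^2F(s)]}$, and the reference transition cost $c_\e$ by $\gamma_\e$. Since the claimed bound is trivial whenever $\mathcal{E}_\e[u]=+\infty$, we may assume $\mathcal{E}_\e[u]<\infty$; this forces $\e^2|u_x|\le1$ almost everywhere, so that \eqref{eq:strangeineq-} applies pointwise with $x=u_x$ and $y=F(u)$. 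Integrating it exactly as in \eqref{eq:ineq} gives, for every $a\le c<d\le b$,
\[
	\int_c^d\left[\frac{1-\sqrt{1-\e^4u_x^2}}{\e^3}+\frac{F(u)}\e\right]dx\ge\left|\int_{u(c)}^{u(d)}\sqrt{F(s)[2+\e^2F(s)]}\,ds\right|,
\]
the analogue of \eqref{eq:ineq}. Note that, in contrast with the saturating case, no smallness condition on $\max F$ is needed here, because $2+\e^2F(s)>0$ automatically.

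Next I would fix parameters $\hat r\in(0,r)$ and $0<\rho_2<\rho_1$ as in \eqref{eq:nu}--\eqref{eq:forrho2} (with $\rho_1$ so small that \eqref{eq:nu} holds with \emph{strict} inequality, which is possible since $A<r\sqrt{2\lambda}$, and that $F(s)\le\Lambda(s-1)^2$ near $s=1$; and $\rho_2$ so small that the two mass-comparison inequalities of \eqref{eq:forrho2} hold for the new integrand), and then $\delta\in(0,\hat r\rho_2)$. Working at a single transition point $h_i$ of $v$, the bound $\|u-v\|_{{}_{L^1}}\le\delta$ forces, by the same contradiction argument used for \eqref{2points}, the existence of $r_\pm\in(0,\hat r)$ with $|u(h_i+r_+)-1|<\rho_2$ and $|u(h_i-r_-)+1|<\rho_2$. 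If $u(h_i+r_+)\ge1$ and $u(h_i-r_-)\le-1$, the reduction inequality gives $\int_{h_i-r}^{h_i+r}[\cdots]\,dx\ge\gamma_\e$ directly; otherwise one decomposes exactly as in \eqref{eq:Pe}, writing $\int_{h_i-r}^{h_i+r}[\cdots]\,dx\ge I_1+I_2+\gamma_\e-\alpha_\e-\beta_\e$, with $I_1,I_2$ the energies over $[h_i+r_+,h_i+r]$ and $[h_i-r,h_i-r_-]$ and $\alpha_\e,\beta_\e$ the corresponding boundary defects, and one must show $I_1\ge\beta_\e-\tfrac{C}{2N}\exp(-A/\e)$ and, symmetrically, $I_2\ge\alpha_\e-\tfrac{C}{2N}\exp(-A/\e)$.

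For the estimate on $I_1$ I would introduce the constrained minimizer $z$ with $z(h_i+r_+)=u(h_i+r_+)$: if the range of $z$ leaves $(1-\rho_1,1+\rho_1)$, the reduction inequality and the choice of $\rho_2$ give $I_1\ge\beta_\e$ at once, so the delicate case is when $z$ stays near $1$, where the Euler--Lagrange equation reads $z''=\e^{-2}F'(z)(1-\e^4(z')^2)^{3/2}$ with $z'(h_i+r)=0$. Setting $\psi=(z-1)^2$ one gets $\psi''\ge\frac{2}{\e^2}(z-1)F'(z)(1-\e^4(z')^2)^{3/2}$, and here is the one point where the argument genuinely differs from the Euclidean case: the factor $(1-\e^4(z')^2)^{3/2}$ is now $\le1$, the \emph{wrong} direction for a lower bound. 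The remedy is the first integral of the Euler--Lagrange equation, which, using $z'(h_i+r)=0$, reads
\[
	\frac{1}{\sqrt{1-\e^4(z')^2}}=1+\e^2\bigl(F(z)-F(z(h_i+r))\bigr);
\]
combined with $0\le F(z)\le\Lambda(z-1)^2\le\Lambda\rho_1^2$ near $z=1$ this yields $(1-\e^4(z')^2)^{3/2}\ge(1+\e^2\Lambda\rho_1^2)^{-3}=:\kappa_\e$, with $\kappa_\e\to1$ as $\e\to0$. Hence, by the same Taylor expansion of $F'$ used in the Euclidean proof, $\psi''\ge\e^{-2}\kappa_\e(2\lambda-2\nu\rho_1)\psi$, which for $\e_0$ small is $\ge\e^{-2}\mu^2\psi$ with $\mu=A/(r-\hat r)$, thanks to the strict form of \eqref{eq:nu}. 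From here the maximum-principle comparison with the explicit solution $\hat\psi$ runs verbatim as in \eqref{|z-v+|<exp}, giving $|z(h_i+r)-1|\le\sqrt2\,\rho_2\exp(-A/2\e)$; together with the quadratic bound on $\sqrt{F(s)[2+\e^2F(s)]}$ near $s=1$, the analogues of \eqref{fi<exp}--\eqref{E>fi-exp} then produce the estimate for $I_1$ (and likewise $I_2$). Summing over $i=1,\dots,N$ gives $\mathcal{E}_\e[u]\ge N\gamma_\e-C\exp(-A/\e)$.

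The only real obstacle, as indicated, is the lower bound on the degenerate diffusion factor $(1-\e^4(z')^2)^{3/2}$ along the constrained minimizer; once this is secured via the first-integral identity, the rest is a routine transcription of Proposition~\ref{prop:lower}, the Lorentz--Minkowski setting being in fact slightly more comfortable since the constraint $\e^2|u_x|\le1$ is automatic for finite energy and no bound on $\max F$ is required.
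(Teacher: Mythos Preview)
Your proposal is correct and follows the route the paper itself indicates: it merely states that Proposition~\ref{prop:lower-} ``can be proved with the same technique used to prove Proposition~\ref{prop:lower}; indeed, the computations are very similar, with the appropriate changes due to the fact that here the energy functional is \eqref{eq:energy-},'' without spelling out any details. So at the level of overall strategy you are in complete agreement with the paper.

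Where your write-up goes beyond the paper is precisely at the step you flag: in the Euler--Lagrange equation for the constrained minimizer $z$, the factor $(1+\e^4(z')^2)^{3/2}\ge1$ of the Euclidean case is replaced by $(1-\e^4(z')^2)^{3/2}\le1$, so one cannot simply drop it when bounding $\psi''$ from below. The paper does not comment on this. Your remedy via the first integral
\[
\frac{1}{\sqrt{1-\e^4(z')^2}}=1+\e^2\bigl(F(z)-F(z(h_i+r))\bigr)
\]
is the natural one and yields the uniform lower bound $(1-\e^4(z')^2)^{3/2}\ge(1+\e^2\Lambda\rho_1^2)^{-3}\to1$; together with a \emph{strict} choice in \eqref{eq:nu} this recovers $\psi''\ge(\mu^2/\e^2)\psi$ for $\e$ small. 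This is a genuine detail that a careful reader would have to supply, and you have supplied it correctly. The remaining points you raise (finite-energy forcing $\e^2|u_x|\le1$ a.e., no smallness condition on $\max F$ needed, uniform choice of $\rho_2$ for the integrand $\sqrt{F(s)[2+\e^2F(s)]}$) are all accurate observations.
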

Proposition \ref{prop:lower-} can be proved with the same technique used to prove Proposition \ref{prop:lower};
indeed, the computations are very similar, with the appropriate changes due to the fact than here the energy functional is \eqref{eq:energy-}.

Notice that from Proposition \ref{prop:lower-} and \eqref{eq:Ngamma_eps}, it follows that
\begin{equation*}
	N\gamma_\e-C\exp(-A/\varepsilon)\leq\mathcal{E}_\e[U^\e]\leq N\gamma_\e,
\end{equation*}
and, as a trivial consequence
\begin{equation*}
	\lim_{\e\to0}\mathcal{E}_\e[U^\e]=N c_0.
\end{equation*}
Thanks to Proposition \ref{prop:lower-} and the energy dissipation \eqref{eq:energy-var-}, we can proceed
as in Section \ref{sec:met+} to prove Theorem \ref{thm:main-}.
Indeed, we can prove the same estimate \eqref{L2-norm} also for the solutions to \eqref{eq:Q-model} with $Q$ given by \eqref{eq:curv-}
and conclude \eqref{eq:main} (the proof is formally identical, and it is sufficient to substitute the energy $E_\e$ with $\mathcal{E}_\e$; we omit the details).

\subsection{Layer Dynamics}
In this section, we give an estimate on the velocity of the transition points $h_1,\ldots,h_N$.
More precisely, we will show that such velocity is at most exponentially small as $\e\to0$.
We shall consider the case when $Q$ is given by \eqref{eq:curv-} and so the energy $\mathcal{E}_\e$, but the results can be easily extended to the case \eqref{eq:curv+}.

Fix $v$ as in \eqref{vstruct} and define its {\it interface} $I[v]$ as
\begin{equation*}
	I[v]:=\{h_1,h_2,\ldots,h_N\}.
\end{equation*}
For an arbitrary function $u:[a,b]\rightarrow\mathbb{R}$ and an arbitrary closed subset $K\subset\R\backslash\{\pm1\}$,
the {\it interface} $I_K[u]$ is defined by
\begin{equation*}
	I_K[u]:=u^{-1}(K).
\end{equation*}
Finally, we recall that for any $A,B\subset\mathbb{R}$ the {\it Hausdorff distance} $d(A,B)$ between $A$ and $B$ is defined by 
\begin{equation*}
	d(A,B):=\max\biggl\{\sup_{\alpha\in A}d(\alpha,B),\,\sup_{\beta\in B}d(\beta,A)\biggr\},
\end{equation*}
where $d(\beta,A):=\inf\{|\beta-\alpha|: \alpha\in A\}$. 

The following result is purely variational in character and states that, if a function $u\in H^1([a,b])$ is close to $v$ in $L^1$ and 
$\mathcal{E}_\varepsilon[u]$ exceeds of a small quantity the minimum energy to have $N$ transitions, then 
the distance between the interfaces $I_K[u]$ and $I_K[v]$ is small.  
\begin{lem}\label{lem:interface}
Assume that $F\in C^3(\R)$ satisfies \eqref{eq:ass-F} and let $v$ be as in \eqref{vstruct}.
Given $\delta_1\in(0,r)$ and a closed subset $K\subset\R\backslash\{\pm1\}$, 
there exist positive constants $\hat\delta,\varepsilon_0$ (independent on $\e$) and $M_\e>0$ such that for any $u\in H^1([a,b])$ satisfying
\begin{equation}\label{eq:lem-interf}
	\|u-v\|_{{}_{L^1}}<\hat\delta \qquad \quad \mbox{ and } \qquad \quad \mathcal{E}_\varepsilon[u]\leq N\gamma_\e+M_\e,
\end{equation}
for all $\varepsilon\in(0,\varepsilon_0)$, we have
\begin{equation}\label{lem:d-interfaces}
	d(I_K[u], I[v])<\tfrac12\delta_1.
\end{equation}
\end{lem}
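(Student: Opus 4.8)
The plan is to argue by contradiction, mimicking the classical Bronsard--Kohn argument for the Allen--Cahn equation but now exploiting the lower bound of Proposition~\ref{prop:lower-} together with the variational structure of $\mathcal{E}_\e$. Suppose \eqref{lem:d-interfaces} fails; then there is a sequence $\e_n\to 0$, functions $u_n\in H^1([a,b])$ satisfying \eqref{eq:lem-interf} with $\e=\e_n$, and points $p_n$ realizing (up to a small error) the Hausdorff distance, so that $d(I_K[u_n],I[v])\geq\tfrac12\delta_1$. Two cases occur: either some $p_n\in I_K[u_n]$ stays at distance $\geq\tfrac12\delta_1$ from every $h_j$, or some $h_j$ stays at distance $\geq\tfrac12\delta_1$ from $I_K[u_n]$. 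I would treat the second case as the typical one (the first is handled symmetrically): near such an $h_j$, the function $u_n$ avoids the compact set $K$ on an interval of fixed length $\tfrac12\delta_1$ around $h_j$, while it must still perform its full $\pm1$ transition there because of the $L^1$-closeness to $v$.

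The key step is to quantify the energy cost of ``making a transition while skipping $K$.'' Since $K$ is a closed subset of $\R\setminus\{\pm1\}$, there is a number $\eta>0$ and a point $s_0\in(-1,1)\setminus K$ such that any path from a neighborhood of $-1$ to a neighborhood of $+1$ that avoids $K$ must either pass through a gap, forcing its spatial derivative to be controlled on the relevant subinterval, or pay extra. More precisely, I would use the pointwise inequality \eqref{eq:strangeineq-} on the interval $[h_j-\tfrac14\delta_1,h_j+\tfrac14\delta_1]$: as in the proof of Proposition~\ref{prop:lower-}, one obtains
\begin{equation*}
	\int_{h_j-\frac14\delta_1}^{h_j+\frac14\delta_1}\left[\frac{1-\sqrt{1-\e^4u_x^2}}{\e^3}+\frac{F(u)}\e\right]\,dx\geq\left|\int_{u(h_j-\frac14\delta_1)}^{u(h_j+\frac14\delta_1)}\sqrt{F(s)[2+\e^2F(s)]}\,ds\right|,
\end{equation*}
and since $u_n$ realizes essentially the whole transition between values near $-1$ and near $+1$ on this interval (by the $L^1$-bound $\hat\delta$, chosen small, one finds points inside where $u_n$ is within $\rho_2$ of $\mp1$, exactly as in \eqref{2points}), the right-hand side is at least $\gamma_\e - o(1)$. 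Combining with the analogous lower bounds $\gamma_\e - o(1)$ coming from the other $N-1$ transition intervals of $v$ (which are disjoint from the interval around $h_j$ and from each other), and using that these $N$ intervals are pairwise disjoint, one gets $\mathcal{E}_{\e_n}[u_n]\geq N\gamma_{\e_n} - o(1)$ — but this does not yet contradict the hypothesis $\mathcal{E}_{\e_n}[u_n]\leq N\gamma_{\e_n}+M_{\e_n}$.

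The main obstacle, and the place where the choice of $M_\e$ enters, is extracting a \emph{strictly positive} extra cost from the constraint that $u_n$ avoids $K$ near $h_j$. Here I would use compactness of $K$ away from $\pm1$: there is $c_K>0$ such that $\int_{A}^{B}\sqrt{2F(s)}\,ds\geq \int_{-1}^{1}\sqrt{2F(s)}\,ds + c_K$ whenever the path from $A$ to $B$ is forced to backtrack around a gap of $K$, OR, more robustly, one argues that if $u_n$ does \emph{not} take a value in $K$ on $[h_j-\tfrac14\delta_1,h_j+\tfrac14\delta_1]$ yet transitions across the gap containing, say, $[-1+\rho_2,1-\rho_2]\cap K^c$, then on that interval $u_n$ must actually cross a whole $\pm1$ transition \emph{twice} or leave the strip $|s|<1+\rho_1$, in either case contributing an energy surplus bounded below by a fixed positive constant independent of $\e$ (via \eqref{eq:ineq}-type estimates and the strict positivity of $F$ away from $\pm1$). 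Choosing $M_\e$ smaller than this surplus for all small $\e$ — and it does converge, like $\gamma_\e\to c_0$, so $M_\e$ can be taken to be a fixed small constant, or $M_\e\to 0$ slowly — yields $\mathcal{E}_{\e_n}[u_n]\geq N\gamma_{\e_n}+c_K/2 > N\gamma_{\e_n}+M_{\e_n}$, contradicting \eqref{eq:lem-interf}. The delicate point throughout is bookkeeping the ``leftover'' energy on the $2N$ short intervals adjacent to the transition zones exactly as in the estimate \eqref{eq:Pe}-\eqref{eq:I2} of Proposition~\ref{prop:lower-}, so that the surplus $c_K$ is not silently absorbed by the exponentially small errors; since $c_K$ is $\e$-independent while those errors are $O(e^{-A/\e})$, this causes no difficulty for $\e$ small enough, which fixes $\e_0$.
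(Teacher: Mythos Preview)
Your contradiction strategy is right, but you have the two halves of the Hausdorff distance backwards. You call Case 2 (some $h_j$ at distance $\geq\tfrac12\delta_1$ from $I_K[u]$) the ``typical'' one and claim Case 1 is symmetric; in fact Case 2 is essentially \emph{vacuous}. Once you locate points $x_j^-\in(h_j-\tfrac12\delta_1,h_j)$ and $x_j^+\in(h_j,h_j+\tfrac12\delta_1)$ with $u(x_j^\pm)$ within $\rho$ of $\mp1$ (which you correctly do via the $L^1$ argument), the intermediate value theorem forces $u$ to hit every value in $[-1+\rho,1-\rho]$, hence $K\cap(-1+\rho,1-\rho)$, somewhere in $(x_j^-,x_j^+)\subset(h_j-\tfrac12\delta_1,h_j+\tfrac12\delta_1)$. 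So $u$ \emph{does} meet $K$ near $h_j$, and your ``backtracking or leaving the strip'' mechanism is addressing a phantom. The working case is Case 1: a point $p$ with $u(p)\in K$ lying \emph{outside} every interval $(h_i-\tfrac12\delta_1,h_i+\tfrac12\delta_1)$.

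The paper's proof handles that case directly and without sequences, limits, or Proposition~\ref{prop:lower-}. One fixes $\rho>0$ so that $I_\rho:=(-1-\rho,-1+\rho)\cup(1-\rho,1+\rho)\subset\R\setminus K$ and \emph{defines}
\[
M_\e:=2N\max\left\{\int_{1-\rho}^1\sqrt{F(s)[2+\e^2F(s)]}\,ds,\ \int_{-1}^{-1+\rho}\sqrt{F(s)[2+\e^2F(s)]}\,ds\right\},
\]
with $\rho$ also small enough that the minimal $K$-to-$I_\rho$ transport cost $\inf\bigl\{\,\bigl|\int_{\xi_1}^{\xi_2}\sqrt{F[2+\e^2F]}\,\bigr|:\xi_1\in K,\ \xi_2\in I_\rho\bigr\}$ exceeds $2M_\e$. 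In Case 1 the extra point $p$ sits in a subinterval disjoint from the $N$ intervals $[x_i^-,x_i^+]$, and adjacent to $p$ one finds (again by the $L^1$ bound) a point with $u$-value in $I_\rho$; inequality \eqref{eq:strangeineq-} then yields an additional contribution of at least $2M_\e$ on top of $\sum_i\bigl|\int_{u(x_i^-)}^{u(x_i^+)}\sqrt{F[2+\e^2F]}\,\bigr|\geq N\gamma_\e-M_\e$. Thus $\mathcal{E}_\e[u]>N\gamma_\e+M_\e$, a contradiction. Note that $M_\e$ is not chosen ``smaller than the surplus'' after the fact, and no exponentially small error terms appear: the argument is purely algebraic once $\rho$ is fixed.
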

\begin{proof}
Fix $\delta_1\in(0,r)$ and choose $\rho>0$ small enough that 
\begin{equation*}
	I_\rho:=(-1-\rho,-1+\rho)\cup(1-\rho,1+\rho)\subset\R\backslash K, 
\end{equation*}
and 
\begin{equation*}
	\inf\left\{\left|\int_{\xi_1}^{\xi_2}\sqrt{F(s)[2+\e^2F(s)]}\,ds\right| : \xi_1\in K, \xi_2\in I_\rho\right\}>2M_\e,
\end{equation*}
where
\begin{equation*}
	M_\e:=2N\max\left\{\int_{1-\rho}^{1}\sqrt{F(s)[2+\e^2F(s)]}\,ds, \, \int_{-1}^{-1+\rho}\sqrt{F(s)[2+\e^2F(s)]}\,ds \right\}.
\end{equation*}
By reasoning as in the proof of \eqref{2points} in Proposition \ref{prop:lower}, we can prove that for each $i$ there exist
\begin{equation*}
	x^-_{i}\in(h_i-\delta_1/2,h_i) \qquad \textrm{and} \qquad x^+_{i}\in(h_i,h_i+\delta_1/2),
\end{equation*}
such that
\begin{equation*}
	|u(x^-_{i})-v(x^-_{i})|<\rho \qquad \textrm{and} \qquad |u(x^+_{i})-v(x^+_{i})|<\rho.
\end{equation*}
Suppose that \eqref{lem:d-interfaces} is violated. 
Using \eqref{eq:strangeineq-}, we deduce
\begin{align}
	\mathcal{E}_\varepsilon[u]\geq&\sum_{i=1}^N\left|\int_{u(x^-_{i})}^{u(x^+_{i})}\sqrt{F(s)[2+\e^2F(s)]}\,ds\right|\notag\\ 
	& \qquad +\inf\left\{\left|\int_{\xi_1}^{\xi_2}\sqrt{2F(s)[2+\e^2F(s)]}\,ds\right| : \xi_1\in K, \xi_2\in I_\rho\right\}. \label{diseq:E1}
\end{align}
On the other hand, we have
\begin{align*}
	\left|\int_{u(x^-_{i})}^{u(x^+_{i})}\sqrt{2F(s)[2+\e^2F(s)]}\,ds\right|&\geq\int_{-1}^{1}\sqrt{2F(s)[2+\e^2F(s)]}\,ds\\
	&\qquad-\int_{-1}^{-1+\rho}\sqrt{2F(s)[2+\e^2F(s)]}\,ds\\
	&\qquad -\int_{1-\rho}^{1}\sqrt{2F(s)[2+\e^2F(s)]}\,ds\\
	&\geq\gamma_\e-\frac{M_\e}{N}. 
\end{align*}
Substituting the latter bound in \eqref{diseq:E1}, we deduce
\begin{equation*}
	\mathcal{E}_\varepsilon[u]\geq N\gamma_\e-M_\e+\inf\left\{\left|\int_{\xi_1}^{\xi_2}\sqrt{2F(s)[2+\e^2F(s)]}\,ds\right| : \xi_1\in K, \xi_2\in I_\rho\right\}.
\end{equation*}
For the choice of $\rho$,  we obtain	
\begin{align*}
	\mathcal{E}_\varepsilon[u]>N\gamma_\e+M_\e,
\end{align*}
which is a contradiction with assumption \eqref{eq:lem-interf}. Hence, the bound \eqref{lem:d-interfaces} is true.
\end{proof}

Thank to Theorem \ref{thm:main-} and Lemma \ref{lem:interface} we can prove the following result, which gives an upper bound on the velocity of the transition points.
\begin{thm}\label{thm:interface}
Assume that $F\in C^3(\R)$ satisfies \eqref{eq:ass-F}.
Let $u^\varepsilon$ be the solution of \eqref{eq:Q-model}-\eqref{eq:initial}-\eqref{eq:Neu}, with $Q$ given by \eqref{eq:curv-} and 
initial datum $u_0^{\varepsilon}$ satisfying \eqref{eq:ass-u0} and \eqref{eq:energy-ini-}. 
Given $\delta_1\in(0,r)$ and a closed subset $K\subset\R\backslash\{\pm1\}$, set
\begin{equation*}
	t_\varepsilon(\delta_1)=\inf\{t:\; d(I_K[u^\varepsilon(\cdot,t)],I_K[u_0^\varepsilon])>\delta_1\}.
\end{equation*}
There exists $\varepsilon_0>0$ such that if $\varepsilon\in(0,\varepsilon_0)$ then
\begin{equation*}
	t_\varepsilon(\delta_1)>\exp(A/\varepsilon).
\end{equation*}
\end{thm}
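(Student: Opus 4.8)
\textbf{Proof plan for Theorem \ref{thm:interface}.}
The plan is to combine the $L^1$-control provided by Theorem \ref{thm:main-} with the interface-localization estimate of Lemma \ref{lem:interface}, exactly in the spirit of the layer-dynamics arguments in \cite{BrKo90} and \cite{FLM19}. The key observation is that the energy inequality \eqref{eq:energy-ini-} is preserved in time: integrating \eqref{eq:energy-var-} gives $\mathcal{E}_\e[u^\e(\cdot,t)]\leq\mathcal{E}_\e[u^\e_0]\leq N\gamma_\e+C\exp(-A/\e)$ for every $t>0$, so the solution always satisfies the energy bound required in \eqref{eq:lem-interf} once $\e$ is small enough that $C\exp(-A/\e)\leq M_\e$ (note $M_\e$ is bounded below by a positive constant independent of $\e$, since $M_\e\to 2N\int_{1-\rho}^{1}\sqrt{2F}\,ds>0$, while $C\exp(-A/\e)\to0$). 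Thus the only thing that can fail to hold is the $L^1$-proximity condition $\|u^\e(\cdot,t)-v\|_{L^1}<\hat\delta$ in \eqref{eq:lem-interf}.

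First I would fix $\delta_1\in(0,r)$ and the closed set $K\subset\R\setminus\{\pm1\}$, and apply Lemma \ref{lem:interface} with the parameter $\delta_1/2$ in place of $\delta_1$ there (or simply with $\delta_1$) to obtain the constants $\hat\delta,\e_0,M_\e$. Next I would invoke Theorem \ref{thm:main-}: since $\sup_{0\leq t\leq \exp(A/\e)}\|u^\e(\cdot,t)-v\|_{L^1}\to0$ as $\e\to0$ (taking $m=1$), there exists $\e_0>0$ such that for all $\e\in(0,\e_0)$ one has $\|u^\e(\cdot,t)-v\|_{L^1}<\hat\delta$ for every $t\in[0,\exp(A/\e)]$. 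Combining this with the time-preserved energy bound, the hypotheses of Lemma \ref{lem:interface} are met by $u^\e(\cdot,t)$ for all such $t$, whence
\[
	d(I_K[u^\e(\cdot,t)],I[v])<\tfrac12\delta_1, \qquad \text{for all } t\in[0,\exp(A/\e)].
\]
In particular this holds at $t=0$, so $d(I_K[u^\e_0],I[v])<\tfrac12\delta_1$ as well. The triangle inequality for the Hausdorff distance then yields, for every $t\in[0,\exp(A/\e)]$,
\[
	d(I_K[u^\e(\cdot,t)],I_K[u^\e_0])\leq d(I_K[u^\e(\cdot,t)],I[v])+d(I[v],I_K[u^\e_0])<\tfrac12\delta_1+\tfrac12\delta_1=\delta_1.
\]
This shows the defining set in $t_\e(\delta_1)=\inf\{t:\,d(I_K[u^\e(\cdot,t)],I_K[u^\e_0])>\delta_1\}$ contains no point of $[0,\exp(A/\e)]$, hence $t_\e(\delta_1)>\exp(A/\e)$, which is the claim.

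The main subtlety — rather than a genuine obstacle — is bookkeeping the $\e$-dependence: one must check that the threshold $M_\e$ of Lemma \ref{lem:interface} is not overwhelmed by the exponentially small error $C\exp(-A/\e)$ from the initial energy bound \eqref{eq:energy-ini-}, which is immediate since $M_\e$ stays uniformly positive while $C\exp(-A/\e)\to 0$; and that the single $\e_0$ in the statement can be taken as the minimum of the (finitely many) thresholds produced by Lemma \ref{lem:interface} and Theorem \ref{thm:main-}. A minor point worth a sentence is that Theorem \ref{thm:main-} is stated with the limit profile $v$ as in \eqref{vstruct}, so $I[v]=\{h_1,\dots,h_N\}$ is exactly the interface appearing in Lemma \ref{lem:interface}; the interface $I_K[u^\e_0]$ of the initial datum is then automatically within $\tfrac12\delta_1$ of $I[v]$ by the same application of Lemma \ref{lem:interface} at $t=0$, which is what makes the triangle inequality close. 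No new estimate on $u_t$ or on the PDE is needed beyond what Theorem \ref{thm:main-} already encapsulates.
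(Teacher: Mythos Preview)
Your proposal is correct and follows essentially the same approach as the paper: apply Lemma~\ref{lem:interface} at $t=0$ and at every $t\in[0,\exp(A/\e)]$ (using Theorem~\ref{thm:main-} for the $L^1$-bound and the monotonicity of $\mathcal{E}_\e$ for the energy bound), then combine the two $\tfrac12\delta_1$-estimates via the triangle inequality for the Hausdorff distance. Your remark that $M_\e$ stays uniformly positive while $C\exp(-A/\e)\to0$ makes explicit a detail the paper leaves implicit.
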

\begin{proof}
Let $\varepsilon_0>0$ be so small that \eqref{eq:ass-u0} and \eqref{eq:energy-ini-}
imply $u_0^\varepsilon$ satisfies \eqref{eq:lem-interf} for all $\varepsilon\in(0,\varepsilon_0)$.
From Lemma \ref{lem:interface} it follows that
\begin{equation}\label{interfaces-u0}
	d(I_K[u_0^\varepsilon], I[v])<\tfrac12\delta_1.
\end{equation}
Now, consider $u^\varepsilon(\cdot,t)$ for all $t\leq\exp(A/\varepsilon)$.
Assumption \eqref{eq:lem-interf} is satisfied thanks to \eqref{eq:main} and because $E_\varepsilon[u^\varepsilon](t)$ is a non-increasing function of $t$.
Then,
\begin{equation}\label{interfaces-u}
	d(I_K[u^\varepsilon(t)], I[v])<\tfrac12\delta_1
\end{equation}
for all $t\in(0,\exp(A/\varepsilon))$. 
Combining \eqref{interfaces-u0} and \eqref{interfaces-u}, we obtain
\begin{equation*}
	d(I_K[u^\varepsilon(t)],I_K[u_0^\varepsilon])<\delta_1,
\end{equation*}
for all $t\in(0,\exp(A/\varepsilon))$.
\end{proof}

\section{Numerical solutions}\label{sec:num}
In this section we present the results of some numerical calculations for the solutions to \eqref{eq:Q-model}-\eqref{eq:Neu} 
in the cases where the diffusion function $Q$ is determined by both \eqref{eq:curv+} and \eqref{eq:curv-}. 
The computed solutions confirm the analytical results of Sections \ref{sec:met+} and \ref{sec:met-}.
Moreover, we numerically show that the metastable states are attractive for a wide class of initial data,
that is, we show examples where the initial datum $u_0$ does not have a transition layer structure, 
but, after a very short time $T>0$, the solution reaches  such a structure and exhibits a metastable dynamics.

Finally, we consider the case when the assumptions \eqref{eq:ass-F} on $F$  are not satisfied and we
show that they are crucial for the appearance of the phenomenon of metastability.
More precisely, we consider the \emph{degenerate case}, $F''(\pm1)=0$, and we show that 
the solutions maintain a transition layer structure for a time much smaller than in the case with $F''(\pm1)>0$.

\subsection{Numerical experiment no. 1}

Let us start with an example where all the assumptions of Sections \ref{sec:met+} and \ref{sec:met-} are satisfied. 
The first numerical simulation considers a potential of the form 
\begin{equation}
\label{dwpot}
F(u)=\frac14(u^2-1)^2,
\end{equation}
which is the simplest example of function satisfying \eqref{eq:ass-F}, as well as an initial datum $u_0$ having a transition layer structure. Figure \ref{fig:6trans} depicts the plots of the computed numerical solutions to \eqref{eq:Q-model}-\eqref{eq:Neu}-\eqref{eq:initial} with $\e=0.1$ and for such potential $F$. The selected initial datum underlies a $6$-transition layer structure with specific transition points.

\begin{figure}[hbtp]
\centering
\subfigure[Saturating diffusion: $Q$ given by \eqref{eq:curv+}]{\label{1left}\includegraphics[scale = 0.38]{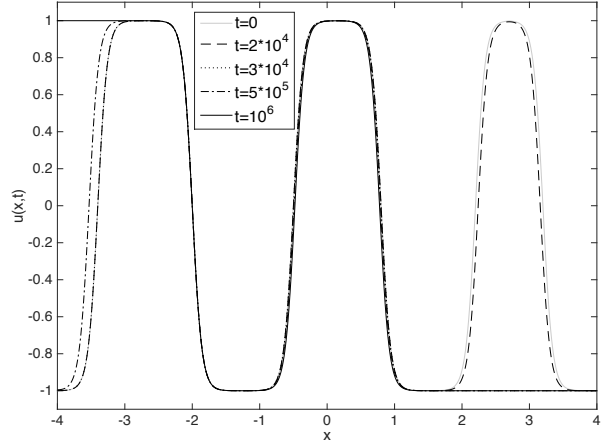}}
\subfigure[Unbounded diffusion: $Q$ given by \eqref{eq:curv-}]{\label{1right}\includegraphics[scale = 0.38]{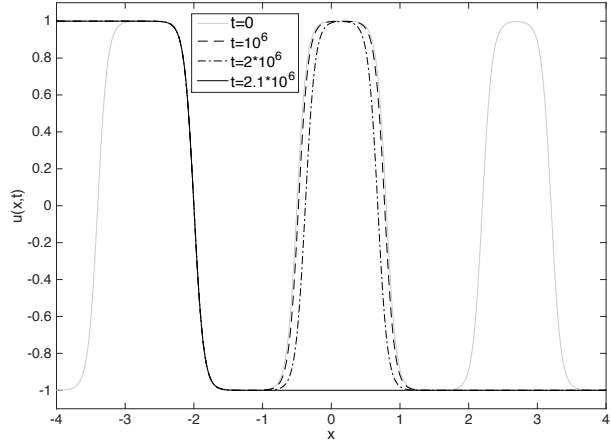}}
\caption{Numerical solutions to \eqref{eq:Q-model}-\eqref{eq:Neu}-\eqref{eq:initial} with $F(u)=\frac14(u^2-1)^2$, $\e=0.1$
and an initial profile $u_0$ with a 6-transition layer structure.
The transition points are located at $(-3.4,-2,-0.5,0.8,2.2,3.2)$.
In Figure \ref{1left} $Q$ is given by \eqref{eq:curv+}, while in Figure \ref{1right} $Q$ is determined by \eqref{eq:curv-}.}
\label{fig:6trans}
\end{figure}

In Figure \ref{1left}, we plot the solution of the IBVP \eqref{eq:Q-model}-\eqref{eq:Neu}-\eqref{eq:limit} 
with $Q$ given by \eqref{eq:curv+} for different times $t$. It can be observed that the solution maintains the same $6$-transition layer structure of the initial datum for a long time.
More precisely, the solution at time $t=2*10^4$ is very close to the initial datum (in grey), whereas at time $t=3*10^4$ the two closest transition points have already collapsed. 
Therefore, the solution maintains the same transition layer structure of the initial datum for a time $t>2*10^4$ 
(observe that the distance between the two transition points is $d=1$ and $\exp(1/\e)=\exp(10)=2.2*10^4$).
Next, at time $t=5*10^5$ another layer disappears and for $t=10^6$ the solution has a $3$-transition layer structure and it is still far away from the asymptotic equilibrium.
Finally, it is important to notice that the solution at time $t=10^6$ is almost indistinguishable from the initial datum near to the three transition points left.
As in the linear case, the two closest transition points collapse, while the other ones seem static.

Figure \ref{1right} illustrates the computed solutions in the case where the diffusion function is given by \eqref{eq:curv-}.
The $6$ transition points are located at the same positions as before and, 
since there are no important differences in the metastable dynamics of the solution until $t=10^6$, we plot the solution for later times.
We see that the transition points collapse at time $t=2.1*10^6$ and, after that, the solution has a $1$-transition layer structure.
As before, notice that the only transition point left at time $t=2.1*10^6$ is very close (indistinguishable) to the one of the solution at  time $t=0$.
Because of the homogeneous Neumann boundary conditions, we expect that the last transition point is attracted by the left boundary point 
and that the solution reaches the constant equilibrium $u \equiv -1$ after a time $t=\exp(4/\e)\approx10^{17}$. 

\subsection{Numerical experiment no. 2: initial datum without layer structure}

In the next numerical simulation, we show that metastability is a general phenomenon for the model \eqref{eq:Q-model}, since it appears even when one does not consider initial data with a layer structure.
In particular, we expect (as it happens in the linear case \eqref{eq:Al-Ca}, cf. \cite{ChenX04}) that, since $\e$ is very small, 
in the first phase of the dynamics we can neglect the term $\e^2 Q(\e^2u_x)_x$, so that the evolution of the solution is described by the reduced equation $u_t=-F'(u)$.
Hence, if the initial datum has $N$ sign changes located at some points $h_1,\dots,h_N$, 
then, after a very short time, the solution will have an $N$-transition layer structure with layers  located exactly at $h_1,\dots,h_N$. 

Let us start by considering a saturating diffusion function $Q$ given by \eqref{eq:curv+}, the same double well potential \eqref{dwpot} as in the first numerical experiment and the discontinuous initial datum
\begin{equation*}
	u_0(x)=
	\begin{cases}
		0.1, \qquad \qquad & x\in[-1,-0.05)\cup(0.05,1],\\
		-0.1, & x\in(-0.05,0.05),\\
		0, & x=\pm0.05.	
	\end{cases}
\end{equation*}
Therefore, the initial datum is a small perturbation of $u \equiv 0$, which is an unstable equilibrium point. Figure \ref{fig:discon} illustrates the computed numerical solutions to \eqref{eq:Q-model}-\eqref{eq:Neu}-\eqref{eq:initial} with $Q$ given by \eqref{eq:curv+}, the double well potential \eqref{dwpot}, $\e=0.01$
and discontinuous initial datum $u_0$, which has two jumps at $x=-0.05$ and $x=0.05$.

We see that the solution becomes smooth in a very short time and at time $t=6$ the solution has a $2$-transition layer structure
(Figure \ref{2left});
the distance $d$ between the transition points is small ($d=0.1$), but we choose $\e=0.01$ and then we observe a  metastable dynamics (Figure \ref{2right}). Indeed, the structure appears stable until time $t=8*10^4$ and after that the two transition points disappear and the solution reaches the constant equilibrium $u \equiv 1$.

\begin{figure}[hbtp]
\centering
\subfigure[$0 \leq t \leq 6$]{\label{2left}\includegraphics[scale = 0.38]{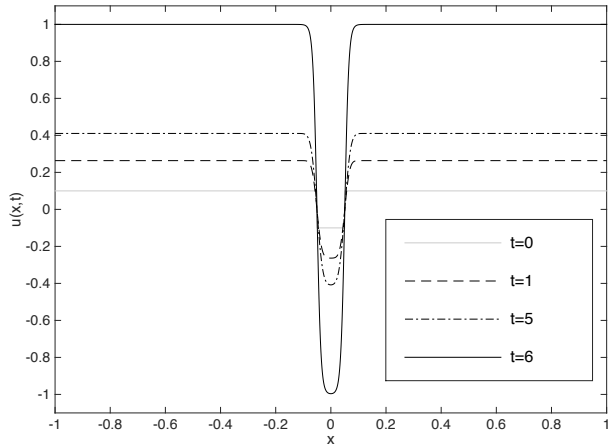}}
\subfigure[$0 \leq t \leq 9*10^4$]{\label{2right}\includegraphics[scale = 0.38]{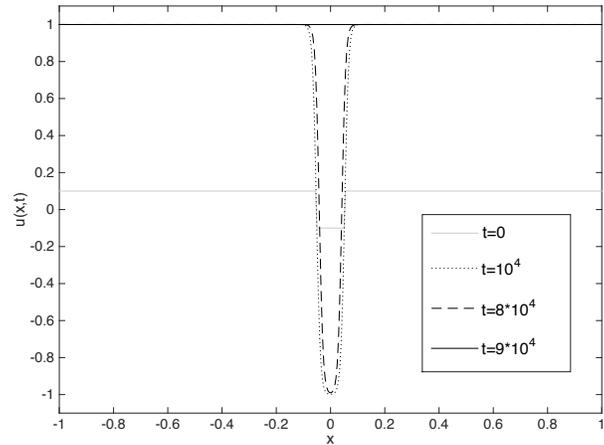}}
\caption{Numerical solutions to \eqref{eq:Q-model}-\eqref{eq:Neu}-\eqref{eq:initial} with $Q$ given by \eqref{eq:curv+}, $F(u)=\frac14(u^2-1)^2$, $\e=0.01$
and discontinuous initial datum $u_0$, which has two jumps in $-0.05$ and $0.05$.}
\label{fig:discon}
\end{figure}

The next numerical experiment shows that metastable states are attractive also in the case of the unbounded diffusion $Q$ given by \eqref{eq:curv-}. In this case, we cannot consider discontinuous initial data. Hence, we opted for a smooth perturbation of $u \equiv 0$ with a finite number of sign changes. Figure \ref{fig:0trans-} contains the plots of the computed numerical solution to \eqref{eq:Q-model}-\eqref{eq:Neu}-\eqref{eq:initial} with $Q$ given by \eqref{eq:curv-}, $F(u)=\frac14(u^2-1)^2$, $\e=0.1$
and initial datum given by 
\[
u_0(x) = \frac{1}{100}\Big[\cos\big(\frac{\pi x}{2}\big)+\sin\big(\frac{\pi x}{2}\big)\Big]. 
\]

In Figure \ref{3left} we observe the formation of the metastable state (notice that the layers are located exactly in the zeros of the initial datum). In Figure \ref{3right} we recognize the metastable dynamics:
for $t=10^4$ the solution has the same transition layer structure as in $t=10$, while when $t=10^5$ the first transition layer has already disappeared.

\begin{figure}[hbtp]
\centering
\subfigure[$0 \leq t \leq 10$]{\label{3left}\includegraphics[scale = 0.38]{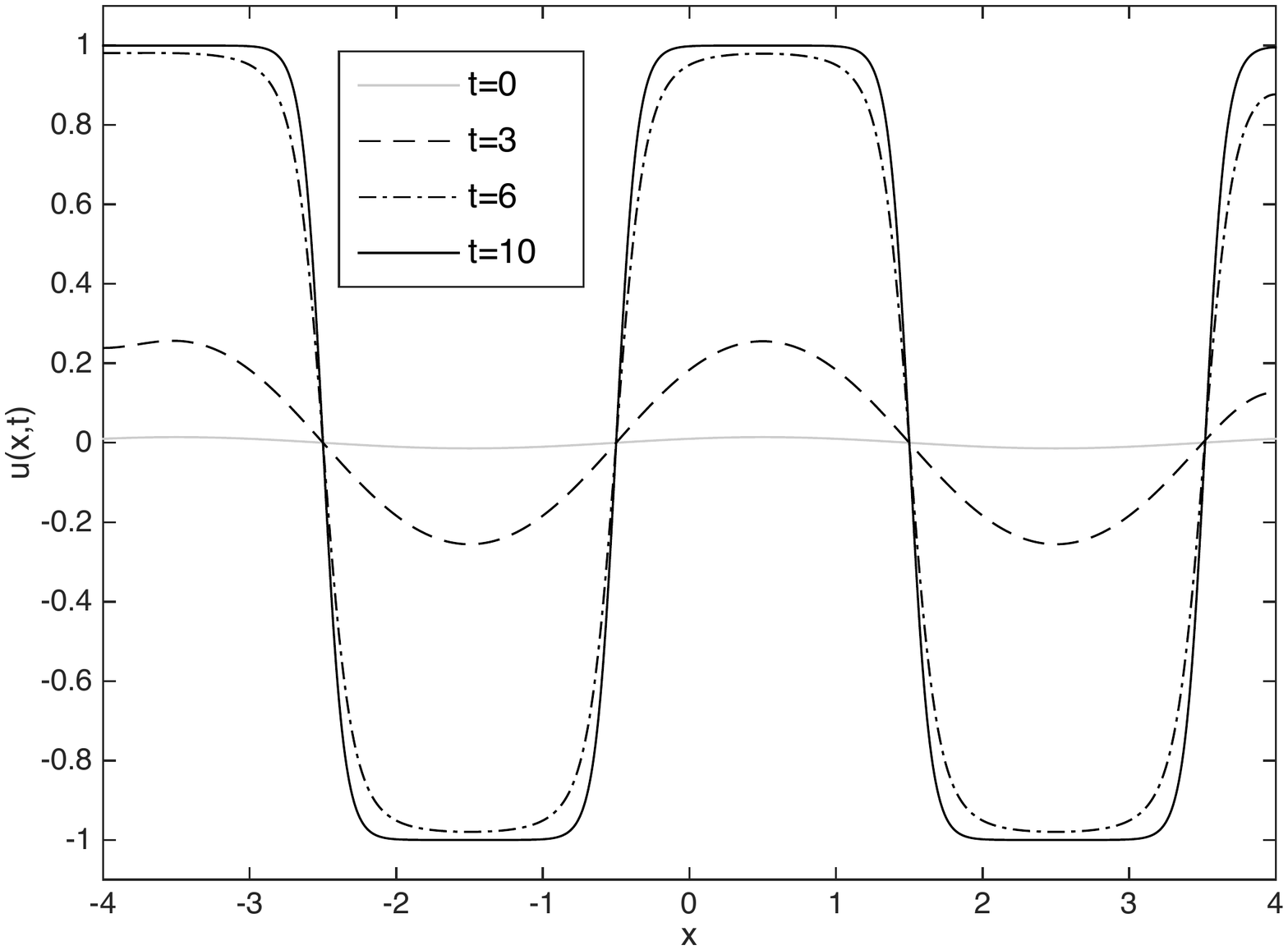}}
\subfigure[$0 \leq t \leq 10^5$]{\label{3right}\includegraphics[scale = 0.38]{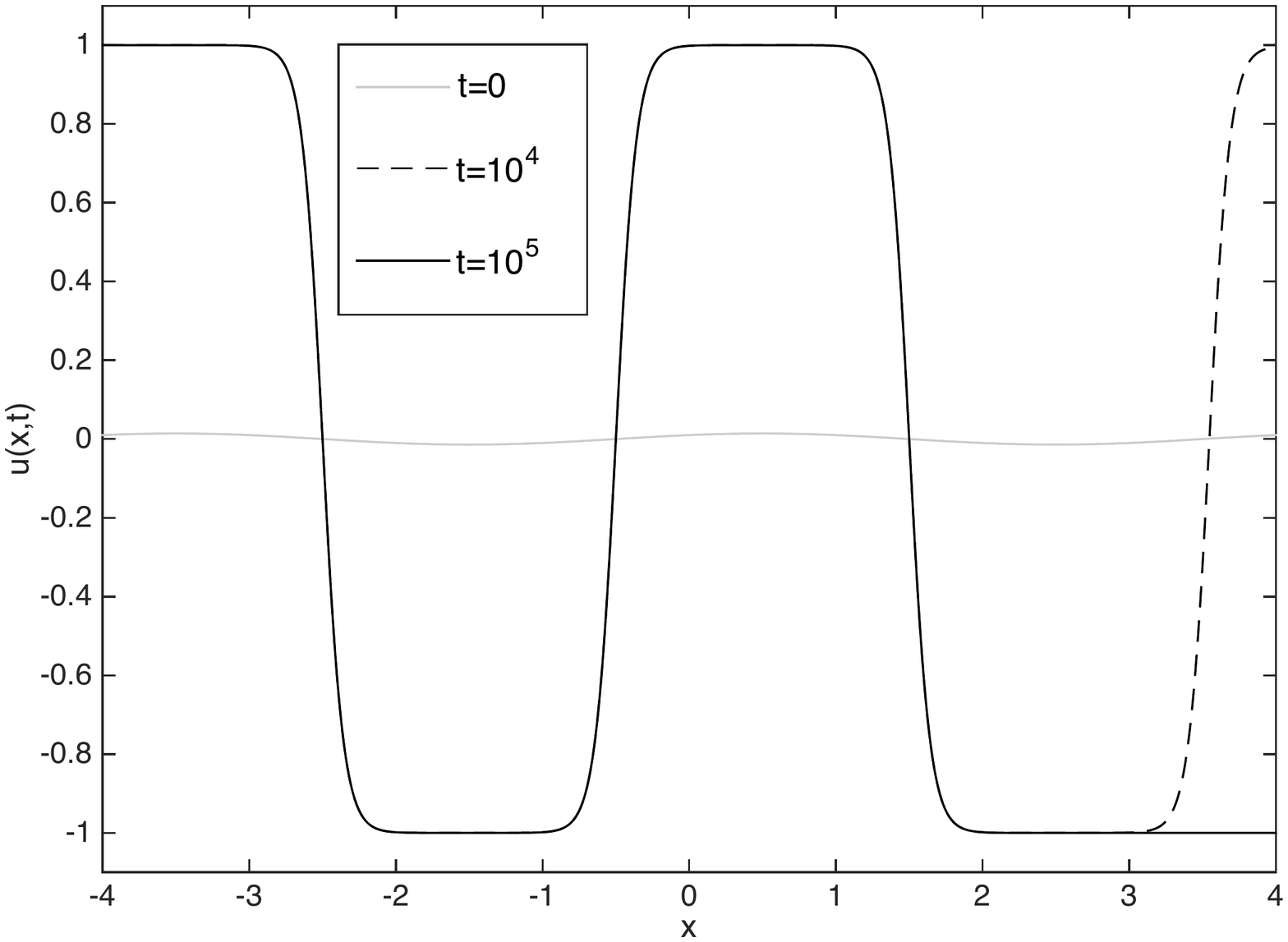}}
\caption{Numerical solutions to \eqref{eq:Q-model}-\eqref{eq:Neu}-\eqref{eq:initial} with $Q$ given by \eqref{eq:curv-}, $F(u)=\frac14(u^2-1)^2$, $\e=0.1$
and initial datum $u_0(x)=[\cos(\frac{\pi x}{2})+\sin(\frac{\pi x}{2})]/100$.}
\label{fig:0trans-}
\end{figure}

\subsection{Numerical experiment no. 3: degenerate potential}

In the previous calculations, the potential under consideration was determined by the double well function \eqref{dwpot} so that the reaction term in \eqref{eq:Q-model} is $-F'(u) = u(1-u^2)$. In these last numerical simulations, we choose the following potential,
\begin{equation}\label{eq:F-n}
	F(u)=\frac1{4n}(u^2-1)^{2n}, \qquad n>1,
\end{equation}
for which the corresponding reaction term becomes $-F'(u) = u(1-u^2)^{2n-1}$.
Notice that $F$ is a double well potential with wells of equal depth, but it is \emph{degenerate} at the wells in the sense that $F''(\pm1)=0$.
Such \emph{degenerate} case has been studied in \cite{BetSme2013} for the linear diffusion flux $Q(s)=s$ and the authors proved that the 
exponentially slow motion is replaced by an \emph{algebraic slow motion}, that is the speed of the layers
is proportional to $\e^{k}$, where the power $k$ depends on the degeneracy of the wells, for details see \cite{BetSme2013}.  
Here, we numerically study the degenerate case and show that, as in the linear case, the solutions do not exhibit exponentially slow motion.

First, we consider the case of saturating diffusion $Q$ given by \eqref{eq:curv+}, the potential \eqref{eq:F-n} with $n = 2$ and initial datum $u_0$ underlying a 6-transition layer structure, with transition points located at $h_j = -3.4,-2,-0.5,0.8,2.2$ and $3.2$, just as in the numerical experiment no. 1. 
Figure \ref{fig:deg+} contains the plots of the computed solutions to \eqref{eq:Q-model}-\eqref{eq:Neu}-\eqref{eq:initial}. 
It is to be observed that the situation drastically changes with respect to the first (non degenerate) numerical simulation (see Figure \ref{1left}); indeed, when $t=450$  the two closest transition points have already collapsed, and for $t=2000$ only one transition is left.

\begin{figure}[hbtp]
\centering
\subfigure[$0 \leq t \leq 450$]{\label{4left}\includegraphics[scale = 0.38]{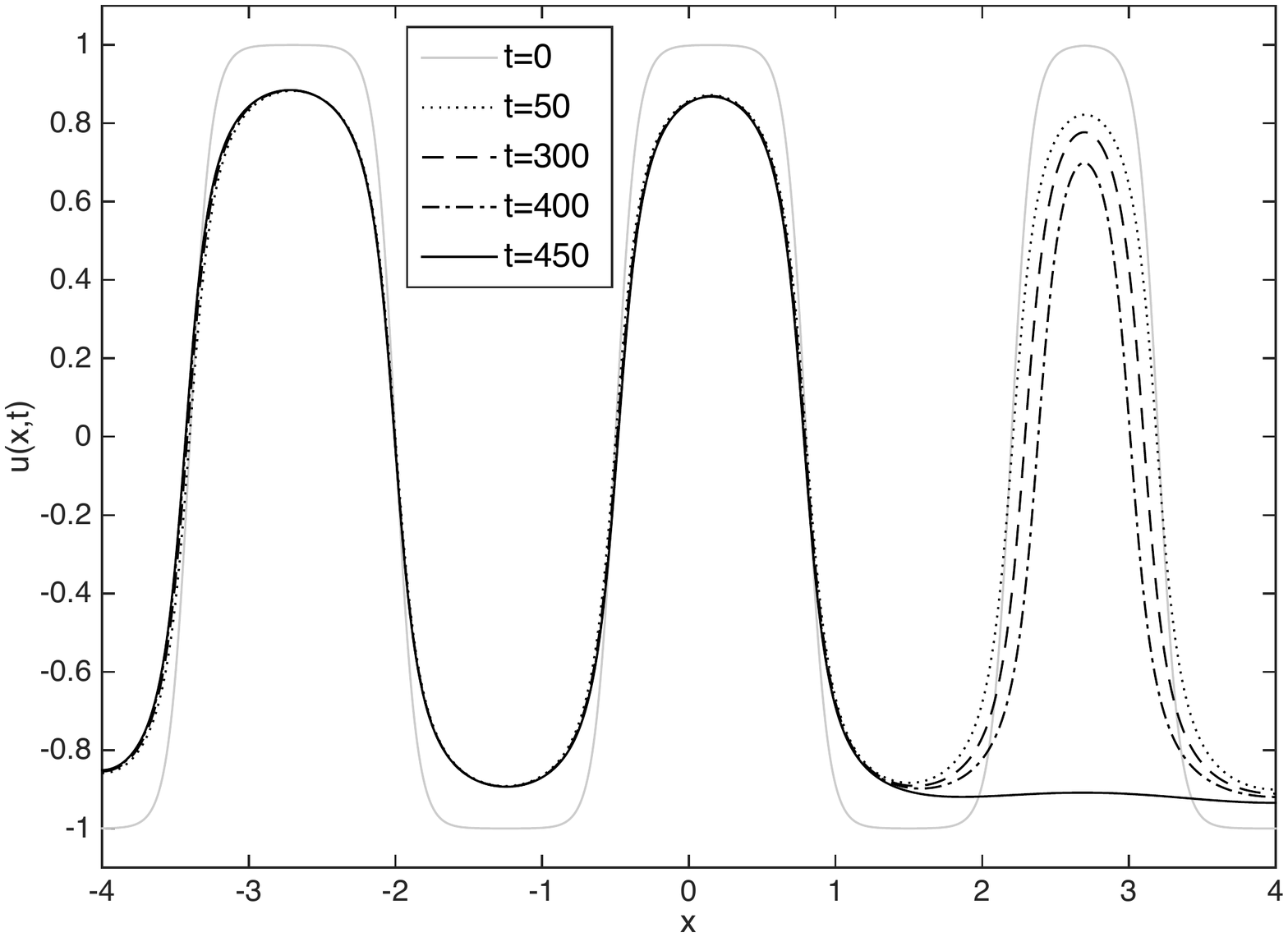}}
\subfigure[$0 \leq t \leq 2000$]{\label{4right}\includegraphics[scale = 0.38]{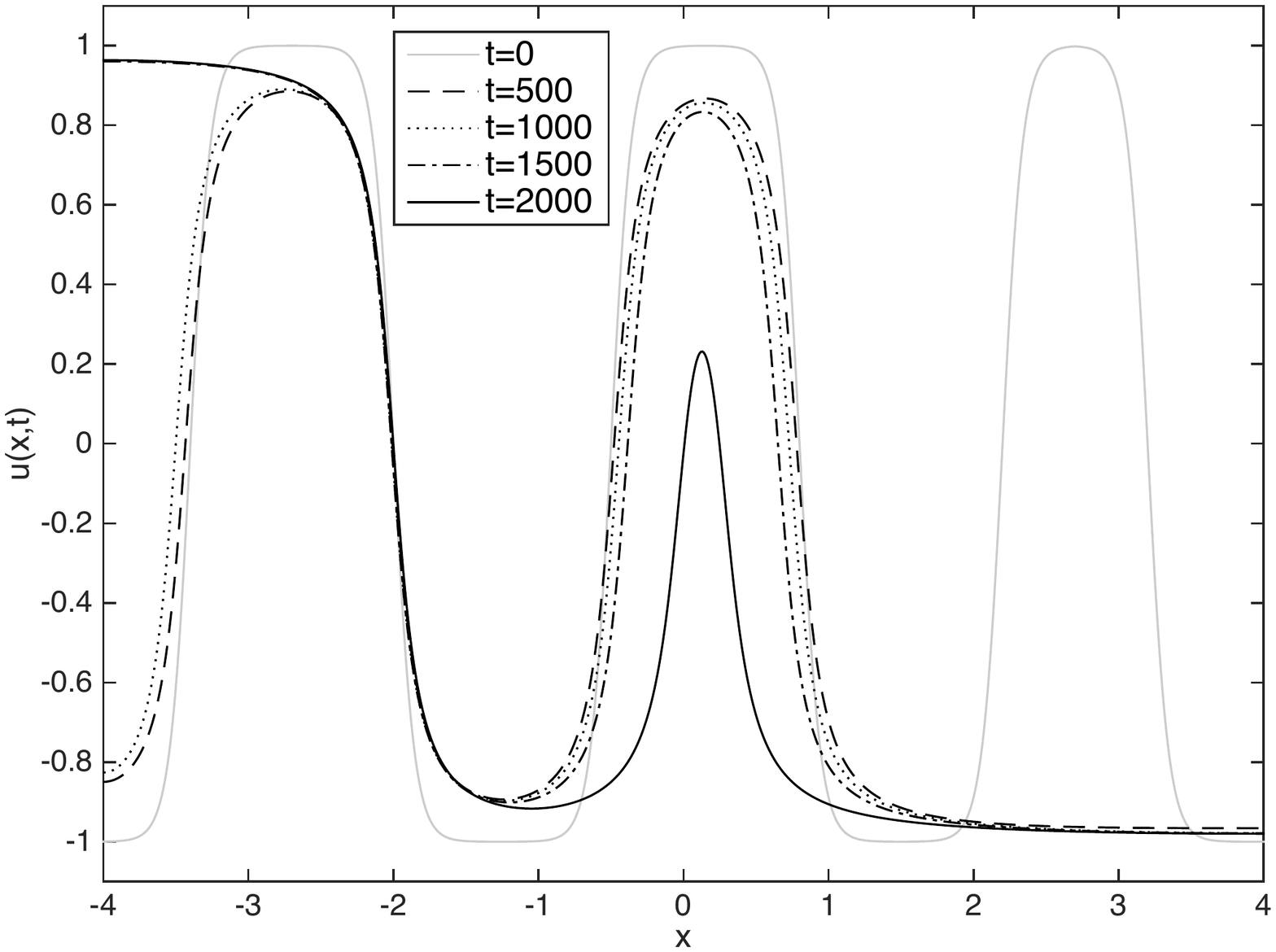}}
\caption{Numerical solutions to \eqref{eq:Q-model}-\eqref{eq:Neu}-\eqref{eq:initial} with $Q$ given by \eqref{eq:curv+}, $F(u)=\frac18(u^2-1)^4$, $\e=0.1$
and initial datum $u_0$ with a 6-transition layer structure and transition points located at $(-3.4,-2,-0.5,0.8,2.2,3.2)$, just as in the first numerical experiment (see Figure \ref{fig:6trans}).}
\label{fig:deg+}
\end{figure}

Finally, let us consider an unbounded diffusion $Q$ as in \eqref{eq:curv-}, $n=3$ in \eqref{eq:F-n} and 
the same values of $\e$ and $u_0$ as in the previous calculation. Figure \ref{fig:deg-} exhibits the results of the numerical solutions.  
Here we observe that the time of the collapse of the transition layers is even smaller than the one of Figure \ref{fig:deg+}.

\begin{figure}[hbtp]
\centering
\subfigure[$0 \leq t \leq 300$]{\label{4left}\includegraphics[scale = 0.38]{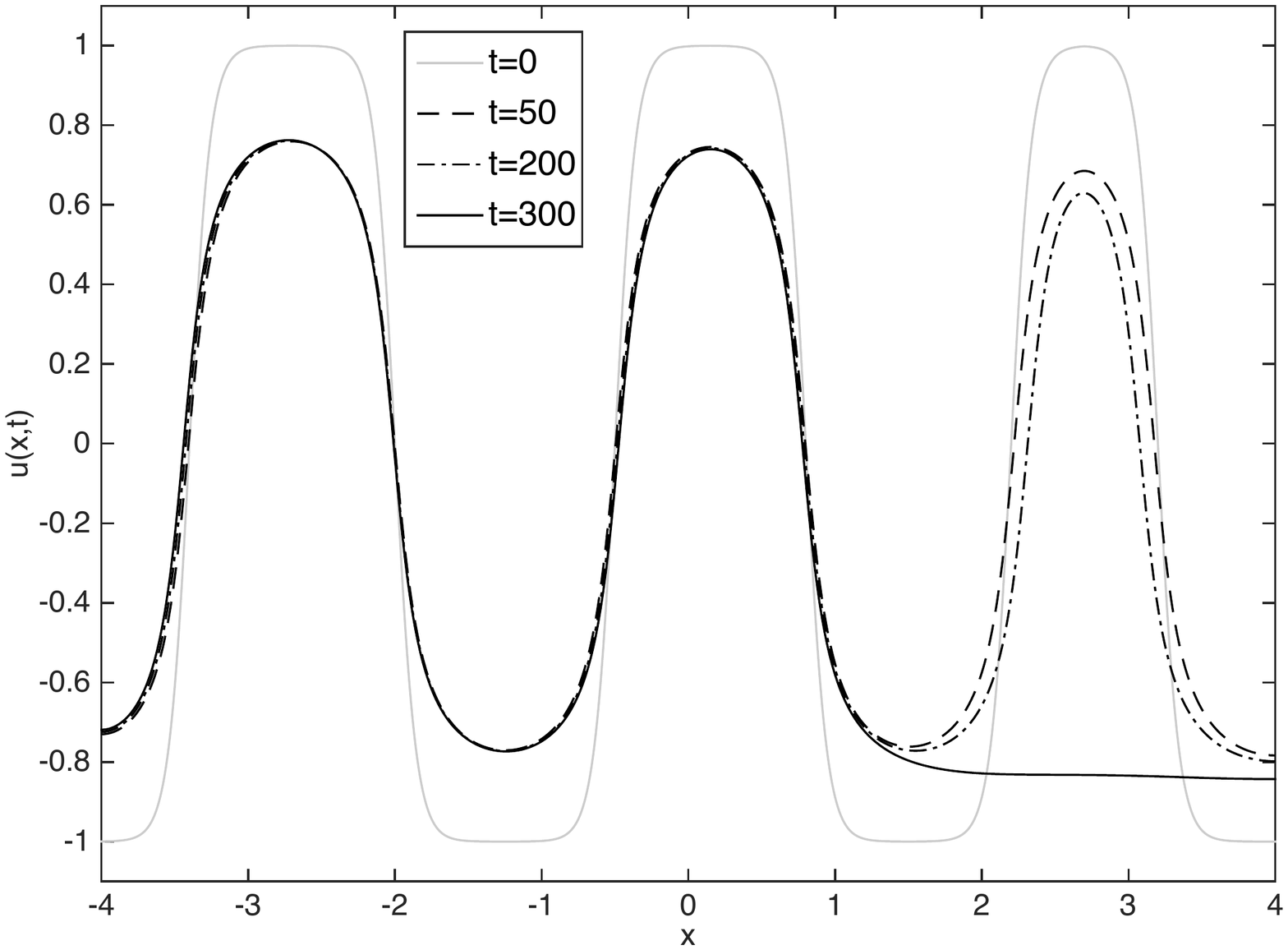}}
\subfigure[$0 \leq t \leq 1200$]{\label{4right}\includegraphics[scale = 0.38]{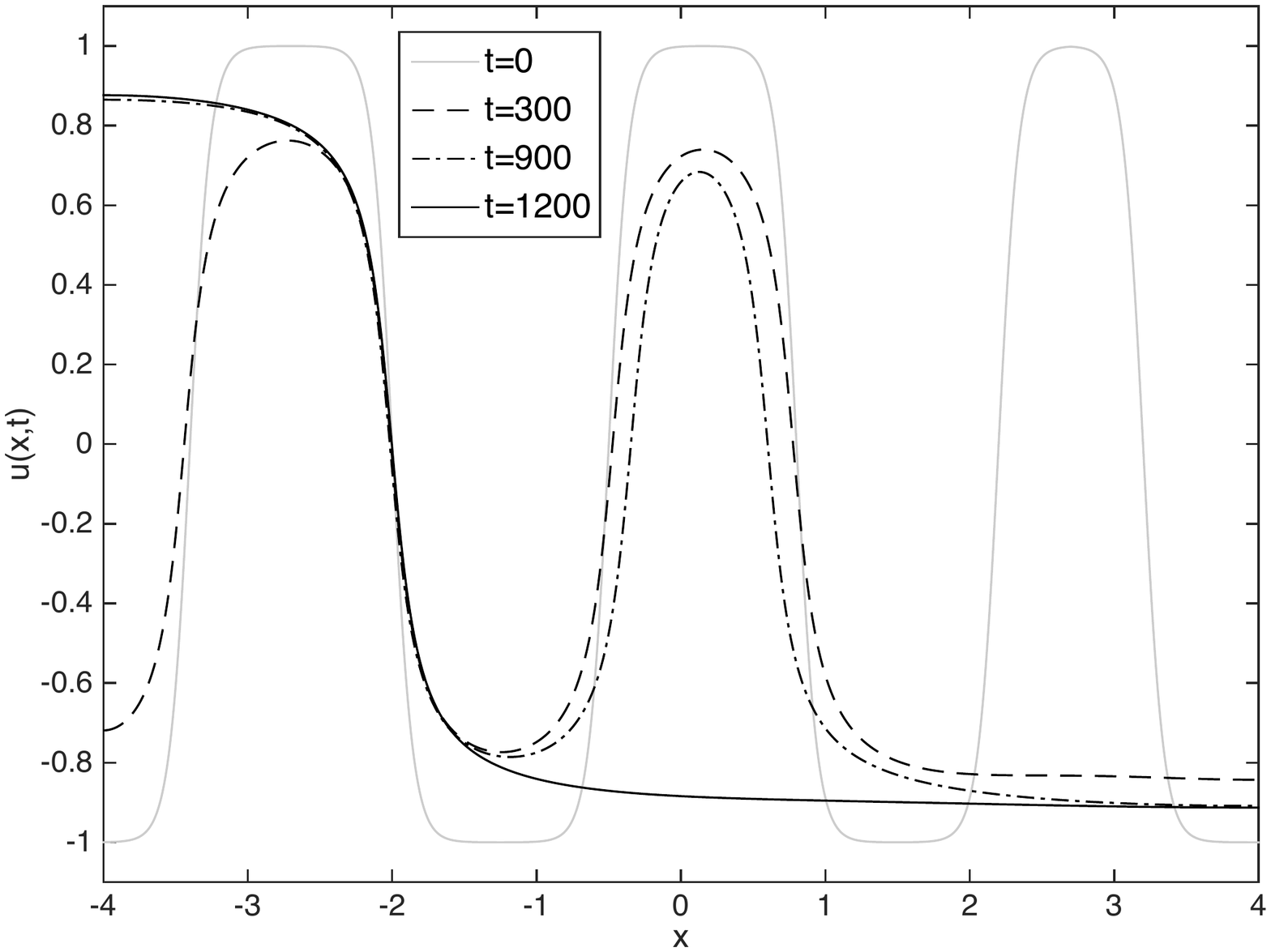}}
\caption{Numerical solutions to \eqref{eq:Q-model}-\eqref{eq:Neu}-\eqref{eq:initial} with $Q$ given by \eqref{eq:curv-}, $F(u)=\frac1{12}(u^2-1)^6$, $\e=0.1$
and initial datum $u_0$ with a 6-transition layer structure and transition points located at $(-3.4,-2,-0.5,0.8,2.2,3.2)$.}
\label{fig:deg-}
\end{figure}


In conclusion, the computations presented here constitute numerical evidence that assumption $F''(\pm1)>0$ is crucial in order to observe a metastable dynamics for equation \eqref{eq:Q-model}.

\section*{Acknowledgements}

The work of RGP was partially supported by DGAPA-UNAM, program PAPIIT, grant IN-100318.

\bibliography{riferimenti}

\bibliographystyle{newstyle}

\end{document}